\newtheorem{theorem}{Theorem}[section]
\newtheorem{lemma}[theorem]{Lemma}
\newtheorem{corollary}[theorem]{Corollary}
\theoremstyle{definition}
\newtheorem{definition}[theorem]{Definition}
\newtheorem{example}[theorem]{Example}
\theoremstyle{remark}
\newtheorem{remark}[theorem]{Remark}
\setlist[enumerate,1]{label=(\arabic*)}
\setlist[enumerate,2]{label=(\roman*)}
\numberwithin{equation}{section}
\pgfplotsset{compat=1.17}
\title{An inequality for the compositions of convex functions with convolutions and an alternative proof of the Brunn--Minkowski--Kemperman inequality \thanks{The final version of the paper is published in Proc. Steklov Inst. Math. 319 (2022): http://dx.doi.org/10.1134/S0081543822050182}}
\author{Takashi Satomi}
\date{\today} 
\begin{document}

\maketitle

\begin{abstract}

  Let $m(G)$ be the infimum of the volumes of all open subgroups of a unimodular locally compact group $G$.
  Suppose integrable functions $\phi_1 , \phi_2 \colon G \to [0,1]$ satisfy $\| \phi_1 \| \leq \| \phi_2 \|$ and $\| \phi_1 \| + \| \phi_2 \| \leq m (G)$, where $\| \cdot \|$ denotes the $L^1$-norm with respect to a Haar measure $dg$ on $G$.
  We have the following inequality for any convex function $f \colon [0, \| \phi_1 \| ] \to \mathbb{R}$ with $f(0) = 0$:
  \begin{align*}
    \int_{G}^{} f \circ ( \phi_1 * \phi_2 ) (g) dg \leq 2 \int_{0}^{\| \phi_1 \|} f(y) dy + ( \| \phi_2 \| - \| \phi_1 \| ) f( \| \phi_1 \| ).
  \end{align*}
  As a corollary, we have a slightly stronger version of Brunn--Minkowski--Kemperman inequality.
  That is, we have
  \begin{align*}
    \mathrm{vol}_* ( B_1 B_2 ) \geq \mathrm{vol} ( \{ g \in G \mid 1_{B_1} * 1_{B_2} (g) > 0 \} ) \geq \mathrm{vol} (B_1) + \mathrm{vol} (B_2)
  \end{align*}
  for any non-null measurable sets $B_1 , B_2 \subset G$ with $\mathrm{vol} (B_1) + \mathrm{vol} (B_2) \leq m(G)$, where $\mathrm{vol}_*$ denotes the inner measure and $1_B$ the characteristic function of $B$.

\end{abstract}

\noindent
\textbf{Keywords:} convolution, convexity, locally compact group, combinatorial inequality, geometric measure theory.

\noindent
\textbf{MSC2020:} Primary 43A05; Secondary 05A20, 22D05, 28C10, 39B62.

\section{Introduction}

Let $\mathcal{A}$ be the set of all open subgroups of a unimodular locally compact group $G$, and we set
\begin{align*}
  m (G) := \inf_{G' \in \mathcal{A}} \mathrm{vol} (G')
\end{align*}
($m(G)$ may be $\infty$).
Let $\| \cdot \|$ be the $L^1$-norm with respect to a Haar measure $dg$ on $G$.
For two measurable functions $\phi_1 , \phi_2 \colon G \to \mathbb{R}_{\geq 0}$, we define the convolution $\phi_1 * \phi_2$ as
\begin{align*}
  \phi_1 * \phi_2 (g) := \int_{G}^{} \phi_1 (g') \phi_2 (g'^{-1} g) dg'.
\end{align*}
The main result of this paper is the following theorem.

\begin{theorem}
  \label{thm:convex-convolution-L1-L-infinity}

  Suppose integrable functions $\phi_1 , \phi_2 \colon G \to [0,1]$ on a unimodular locally compact group $G$ satisfy $\| \phi_1 \| \leq \| \phi_2 \|$ and
  \begin{equation}
    \| \phi_1 \| + \| \phi_2 \| \leq m (G). \label{eq:m-large}
  \end{equation}
  Then, for any convex function $f \colon [0, \| \phi_1 \| ] \to \mathbb{R}$ with $f(0) = 0$, one has
  \begin{equation}
    \int_{G}^{} f \circ ( \phi_1 * \phi_2 ) (g) dg
    \leq 2 \int_{0}^{\| \phi_1 \|} f(y) dy + ( \| \phi_2 \| - \| \phi_1 \| ) f( \| \phi_1 \| ). \label{eq:convex-convolution-inequality}
  \end{equation}

\end{theorem}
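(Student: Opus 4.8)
Write $a=\|\phi_1\|$ and $b=\|\phi_2\|$, so $a\le b$ and $a+b\le m(G)$. Two preliminary facts drive everything. First, since $\phi_2\le 1$, for every $g$ we have $\phi_1*\phi_2(g)=\int_G\phi_1(x)\phi_2(x^{-1}g)\,dx\le\int_G\phi_1(x)\,dx=a$, so $\phi_1*\phi_2$ takes values in $[0,a]$ and $f\circ(\phi_1*\phi_2)$ is well defined; second, by Tonelli and unimodularity $\int_G\phi_1*\phi_2(g)\,dg=ab$. The plan is to reduce the statement for a general convex $f$ to a one-parameter family. A convex $f$ on $[0,a]$ with $f(0)=0$ is exactly one of the form $f(y)=\alpha y+\int_{[0,a)}(y-t)_+\,d\sigma(t)$ with $\alpha\in\mathbb{R}$ and $\sigma\ge 0$ (integrate the nondecreasing right derivative). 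Both sides of (\ref{eq:convex-convolution-inequality}) are linear in $f$; for the linear piece $f(y)=y$ a direct computation using $\int_G\phi_1*\phi_2=ab$ shows both sides equal $ab$, so equality holds there. Hence it suffices to treat $f_t(y)=(y-t)_+$ for each fixed $t\in[0,a]$. For $f_t$ the right-hand side evaluates to $(a-t)(b-t)$, and since $(y-t)_+=y-\min(y,t)$ the left-hand side is $ab-\int_G\min(\phi_1*\phi_2,t)\,dg$; thus the theorem is equivalent to the family
\[
  \int_G\min\bigl(\phi_1*\phi_2(g),\,t\bigr)\,dg\ \ge\ t\,(a+b-t),\qquad t\in[0,a].
\]
Writing $\int_G\min(\phi_1*\phi_2,t)\,dg=\int_0^t\mathrm{vol}(\{\phi_1*\phi_2>s\})\,ds$ exhibits this as a majorization of the distribution function $s\mapsto\mathrm{vol}(\{\phi_1*\phi_2>s\})$ by the profile $s\mapsto a+b-2s$, with the endpoint $t=a$ being the mass identity (equality).

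Proving this family is the crux and the main obstacle, and two tempting shortcuts are unavailable. The pointwise bound $\mathrm{vol}(\{\phi_1*\phi_2>s\})\ge a+b-2s$ is simply false: taking $G=\mathbb{R}$, $\phi_1=1_{[0,1]}$ and $\phi_2=1_{[0,1/2]}+1_{[2,5/2]}$ gives a convolution of height $1/2<a$, so the level set is empty for $s\in(1/2,1)$ while $a+b-2s>0$; only the integrated inequality, which uses the cancellation forced by the total-mass constraint, can survive. Likewise the naive layer-cake reduction—writing $\phi_i=\int_0^1 1_{\{\phi_i>u\}}\,du$ and pushing $(\cdot-t)_+$ through Jensen—produces a bound in the wrong direction, because $\int_0^1(\mathrm{vol}(\{\phi_1>u\})-t)_+\,du\ge(a-t)_+$ rather than $\le$. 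So the core must be argued as a genuine inequality about the distribution of the convolution, not lifted termwise from sets.

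My plan for the core is a Kemperman--Kneser--type structural argument run directly on the super-level sets $L_s=\{\phi_1*\phi_2>s\}$, controlling the products $L_sL_{s'}^{-1}$ and combining the resulting measure estimates with the mass identity $\int_G\phi_1*\phi_2=ab$. The hypothesis $a+b\le m(G)$ enters precisely here: it rules out the open-subgroup (wraparound) obstruction. If $G$ had a small open subgroup $H$, then $\phi_1=\phi_2=1_H$ would give the flat convolution $\mathrm{vol}(H)\,1_H$ whose support has volume $m(G)<a+b=2m(G)$, defeating the bound; the constraint $a+b\le m(G)$ forbids exactly this, so one may argue as if no proper open subgroup of small volume exists. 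The delicate point, where I expect the real work to lie, is making the cancellation quantitative simultaneously at every threshold $t$, rather than only in the limit $t\to 0$.

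Finally, the corollary follows by applying the core inequality to $\phi_i=1_{B_i}$. Since $\min(\phi_1*\phi_2,t)\le t\,1_{\{\phi_1*\phi_2>0\}}$, we get $t\,\mathrm{vol}(\{1_{B_1}*1_{B_2}>0\})\ge t(a+b-t)$ for all $t\in(0,a]$, and letting $t\downarrow 0$ yields $\mathrm{vol}(\{1_{B_1}*1_{B_2}>0\})\ge\mathrm{vol}(B_1)+\mathrm{vol}(B_2)$. Moreover $\{1_{B_1}*1_{B_2}>0\}\subseteq B_1B_2$, since positivity at $g$ gives some $x\in B_1$ with $x^{-1}g\in B_2$, whence $g\in B_1B_2$; as the left-hand set is measurable, $\mathrm{vol}_*(B_1B_2)\ge\mathrm{vol}(\{1_{B_1}*1_{B_2}>0\})$, completing the Brunn--Minkowski--Kemperman chain.
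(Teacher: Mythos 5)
Your reduction of the general convex $f$ to the one-parameter family $f_t(y)=(y-t)_+$ is correct and is essentially the same reduction the paper performs in Section \ref{sec:f-general} (there via the discrete approximants $f_{(n)}$, which are exactly nonnegative combinations of the $f_{k/n}$ plus a linear term, rather than via the Stieltjes measure of $f'_+$; one small caveat with your integral representation is that a convex $f$ in the paper's sense may jump upward at the endpoint $y=\|\phi_1\|$, a contribution not captured by a measure $\sigma$ on $[0,a)$ and requiring an extra limiting step $t\uparrow a$). Your identification of the right-hand side as $(a-t)(b-t)$, the reformulation as $\int_G\min(\phi_1*\phi_2,t)\,dg\ge t(a+b-t)$, and your observations that the pointwise distribution-function bound and the naive layer-cake reduction both fail, are all accurate.

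However, the proposal has a genuine gap at its center: the family of inequalities \eqref{eq:ft-convolution} (Lemma \ref{lem:ft-convolution-L1-L-infinity}), which you correctly isolate as "the crux," is never proved. You offer only a "plan" for "a Kemperman--Kneser--type structural argument run directly on the super-level sets $L_s=\{\phi_1*\phi_2>s\}$," and you yourself flag that the real work lies in making this quantitative at every threshold simultaneously. This is not the route any of the known proofs take, and it is far from clear it can be executed: controlling $\mathrm{vol}(L_s)$ via product-set estimates recovers at best the $t\to 0$ endpoint (the Kemperman inequality, i.e.\ Corollary \ref{cor:Kemperman-strong}), whereas the full family requires genuinely more. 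The paper's actual argument (Sections \ref{sec:S-property}--\ref{sec:main-ft-m-finite}) works at the level of the input functions, not the level sets of the output: one fixes $\phi_1$, defines $S(I)=\sup_{\|\phi\|=I}\|f_t\circ(\phi_1*\phi)\|$, and proves via a submodularity splitting (replacing the pair $\phi,R_g\phi$ by $\nu_1\le\min(\phi,R_g\phi)$ and $\nu_2=\phi+R_g\phi-\nu_1$, with $g$ chosen so that $\|\nu_1\|$ takes a prescribed value) that $S$ is convex; combined with $S(t)=0$ and $S(I)\le(1-t)I$ this gives the bound when $m(G)=\infty$, and the case $m(G)<\infty$ needs an additional rearrangement of $\phi_1,\phi_2$ onto $\mathbb{R}\times G/G_0$ coset by coset. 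None of this machinery, nor any workable substitute for it, appears in your proposal, so the theorem is not established.
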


For example, by using the $L^1$-norm (and $L^\infty$-norm) of $\phi_1$ and $\phi_2$, Theorem \ref{thm:convex-convolution-L1-L-infinity} bounds the $L^p$-norm of the convolution $\phi_1 * \phi_2$ when $f(y) = y^p$ holds (Example \ref{ex:main} \ref{item:main-Lp-small} and \ref{item:main-Lp-large}), and the entropy of $\phi_1 * \phi_2$ from below when $f(y) = y \log y$ (Example \ref{ex:main} \ref{item:main-entropy}).
Actually, Theorem \ref{thm:convex-convolution-L1-L-infinity} is best possible for $G = \mathbb{R}$ (Example \ref{ex:main} \ref{item:main-psi0}).

Here are some other examples of Theorem \ref{thm:convex-convolution-L1-L-infinity}.
When $t \geq 0$ is fixed,
\begin{align}
  f_t (y) :=
  \left\{
  \begin{aligned}
     & y - t &  & \text{if} \; y \geq t \\
     & 0     &  & \text{if} \; y \leq t
  \end{aligned}
  \right. \label{eq:ft-define}
\end{align}
is convex.
Since
\begin{align*}
  2 \int_{0}^{\| \phi_1 \|} f_t (y) dy + ( \| \phi_2 \| - \| \phi_1 \| ) f_t ( \| \phi_1 \| )
  = ( \| \phi_1 \| - t) ( \| \phi_2 \| - t)
\end{align*}
holds for any $0 \leq t \leq \| \phi_1 \|$, we have
\begin{equation}
  \| f_t \circ ( \phi_1 * \phi_2 ) \| \leq (\| \phi_1 \| - t )( \| \phi_2 \| - t ) \label{eq:ft-convolution}
\end{equation}
by applying $f = f_t$ to Theorem \ref{thm:convex-convolution-L1-L-infinity}.
Actually, \eqref{eq:ft-convolution} holds even when the assumption \eqref{eq:m-large} is weakened by
\begin{equation}
  \| \phi_1 \| + \| \phi_2 \| \leq m (G) + t. \label{eq:m-large-weak}
\end{equation}
That is, the following lemma holds.

\begin{lemma}
  \label{lem:ft-convolution-L1-L-infinity}

  One has \eqref{eq:ft-convolution} for any integrable functions $\phi_1 , \phi_2 \colon G \to [0,1]$ on a unimodular locally compact group $G$ and for any $0 \leq t \leq \min \{ \| \phi_1 \| , \| \phi_2 \| \}$ with \eqref{eq:m-large-weak}.

\end{lemma}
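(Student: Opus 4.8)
The plan is to recast \eqref{eq:ft-convolution} as a lower bound and then import Theorem \ref{thm:convex-convolution-L1-L-infinity}, manufacturing the extra room permitted by \eqref{eq:m-large-weak} by passing to a larger group. Write $\psi := \phi_1 * \phi_2$. Since $\int_G \psi(g)\, dg = \|\phi_1\|\,\|\phi_2\|$ by Fubini and left-invariance, and since $f_t(y) = y - \min(y,t)$ for $y \ge 0$, the inequality \eqref{eq:ft-convolution} is equivalent to the concave lower bound
\[
\int_G \min(\psi(g), t)\, dg \ge t\,(\|\phi_1\| + \|\phi_2\| - t),
\]
or, via the layer-cake formula $\min(\psi,t) = \int_0^t 1_{\{\psi > s\}}\, ds$, to $\int_0^t \mathrm{vol}(\{\psi > s\})\, ds \ge t(\|\phi_1\| + \|\phi_2\| - t)$. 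This symmetric form makes transparent why the threshold can relax from \eqref{eq:m-large} to \eqref{eq:m-large-weak}: the left-hand side only probes the values of $\psi$ below level $t$.

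If $\|\phi_1\| + \|\phi_2\| \le m(G)$ there is nothing to do, since \eqref{eq:ft-convolution} is then Theorem \ref{thm:convex-convolution-L1-L-infinity} applied to $f = f_t$, through the identity displayed just before the lemma. So I would assume $m(G) < \|\phi_1\| + \|\phi_2\| \le m(G) + t$, and (after replacing $(\phi_1,\phi_2)$ by $(\check\phi_2,\check\phi_1)$, $\check\phi(g):=\phi(g^{-1})$, which leaves the distribution of the convolution unchanged by unimodularity) that $\|\phi_1\| \le \|\phi_2\|$. The naive idea of deleting mass $\delta := \|\phi_1\| + \|\phi_2\| - m(G) \in (0,t]$ from $\phi_1$ to enter the regime of the Theorem and then applying it to $f_{t-\delta}$ (using $\psi \le (\phi_1' * \phi_2) + \delta$, valid because $\phi_2 \le 1$) does give a bound, but it overshoots the target by $(\|\phi_1\| - t)\delta$; a black-box deletion discards the information carried by $m(G)$ and cannot recover the sharp constant.

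To enlarge $m$ while retaining that information, I would work on $\hat G := G \times \mathbb{T}_\tau$, with $\mathbb{T}_\tau$ the circle group of total mass $\tau := (\|\phi_1\| + \|\phi_2\|)/m(G) \in (1, 1 + t/m(G)]$. As $\mathbb{T}_\tau$ is connected it has no proper open subgroup, so $m(\hat G) = \tau\, m(G) = \|\phi_1\| + \|\phi_2\|$. Lift each $\phi_i$ to $\hat\phi_i(g,x) := \phi_i(g)\,\rho_i(x)$ for profiles $\rho_i \colon \mathbb{T}_\tau \to [0,1]$ of unit mass, so that $\hat\phi_i \colon \hat G \to [0,1]$ with $\|\hat\phi_i\| = \|\phi_i\|$ and, since $\mathbb{T}_\tau$ is abelian, $\hat\phi_1 * \hat\phi_2(g,x) = \psi(g)\,\xi(x)$ with $\xi := \rho_1 * \rho_2$. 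Now $\|\hat\phi_1\| + \|\hat\phi_2\|$ meets $m(\hat G)$ exactly, so Theorem \ref{thm:convex-convolution-L1-L-infinity} applies on $\hat G$ to every convex $F$ with $F(0)=0$ and yields
\[
\int_G \Big( \int_{\mathbb{T}_\tau} F\big(\psi(g)\,\xi(x)\big)\, dx \Big)\, dg \le 2\int_0^{\|\phi_1\|} F(y)\, dy + (\|\phi_2\| - \|\phi_1\|)\, F(\|\phi_1\|).
\]

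The remaining step is to choose the profiles $\rho_i$ and the test function $F$ so that the inner average $\Phi(p) := \int_{\mathbb{T}_\tau} F(p\,\xi(x))\, dx$ dominates $f_t(p)$ pointwise on $[0,\|\phi_1\|]$ while the right-hand side above collapses to exactly $(\|\phi_1\| - t)(\|\phi_2\| - t)$; then $\int_G f_t(\psi) \le \int_G \Phi(\psi)$ together with the display finishes the proof. I expect this matching to be the main obstacle. For arc-indicator profiles and $\tau \ge 2$ one has $\Phi(p) = \tfrac{2}{p}\int_0^p F$, and the target equation $\tfrac{2}{p}\int_0^p F = f_t(p)$ has a solution that jumps at $p = t$ and so fails convexity; one must instead take its convex envelope and control the defect, and one must separately treat $\tau < 2$, where $\xi = \rho_1 * \rho_2$ wraps around $\mathbb{T}_\tau$ rather than being the clean triangular profile. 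It is precisely the geometry of $\xi$ on a circle of circumference $\tau \le 1 + t/m(G)$ that should encode the hypothesis \eqref{eq:m-large-weak}, so that the optimization succeeds exactly under that assumption; the extremal circle configurations, for which \eqref{eq:ft-convolution} holds with equality, should pin down the correct $\rho_i$ and $F$.
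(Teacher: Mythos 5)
There is a genuine gap, and it is structural: your argument is circular relative to this paper. You invoke Theorem \ref{thm:convex-convolution-L1-L-infinity} twice --- once to dispose of the case $\| \phi_1 \| + \| \phi_2 \| \leq m(G)$, and once on the enlarged group $\hat{G} = G \times \mathbb{T}_\tau$ --- but the paper's only proof of Theorem \ref{thm:convex-convolution-L1-L-infinity} (Section \ref{sec:f-general}) is obtained \emph{from} Lemma \ref{lem:ft-convolution-L1-L-infinity}, by writing a convex $f$ as a positive combination of the functions $f_{k/n}$ and applying the lemma to each piece. The introduction states this order explicitly. So unless you supply an independent proof of Theorem \ref{thm:convex-convolution-L1-L-infinity} (or at least of \eqref{eq:ft-convolution} under the stronger hypothesis \eqref{eq:m-large}), nothing in your argument gets off the ground. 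The paper instead proves the lemma directly: it shows $S(I) := \sup_{\| \phi \| = I} \| f_t \circ ( \phi_1 * \phi ) \|$ is convex via a submodularity exchange (Lemma \ref{lem:S-property}), which combined with $S(t) = 0$ and $S(I) \leq (1-t) I$ settles $m(G) = \infty$; for $m(G) < \infty$ it rearranges onto $\mathbb{R} \times G / G_0$ (Lemma \ref{lem:rearrange-large}), an ``enlarge the group'' move in the same spirit as yours but one that does not presuppose the theorem.

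Even granting Theorem \ref{thm:convex-convolution-L1-L-infinity} as a black box, the heart of your construction --- choosing profiles $\rho_i$ and a convex $F$ with $F(0) = 0$ such that $\Phi(p) := \int_{\mathbb{T}_\tau} F( p \, \xi(x) ) dx \geq f_t(p)$ pointwise while $2 \int_0^{\| \phi_1 \|} F + ( \| \phi_2 \| - \| \phi_1 \| ) F( \| \phi_1 \| ) \leq ( \| \phi_1 \| - t )( \| \phi_2 \| - t )$ --- is not carried out, and you yourself flag it as the main obstacle. The constraints are very tight: since $\xi \leq 1$ and $\int \xi = 1$, convexity gives $\Phi( \| \phi_1 \| ) \leq F( \| \phi_1 \| )$, so you are forced to take $F( \| \phi_1 \| ) \geq \| \phi_1 \| - t$ while keeping $2 \int_0^{\| \phi_1 \|} F$ below $( \| \phi_1 \| - t )^2$ in the case $\| \phi_1 \| = \| \phi_2 \|$; a test case ($G$ compact connected of volume $1$, $\| \phi_1 \| = \| \phi_2 \| = 3/4$, $t = 1/2$, hence $\tau = 3/2$) shows that with arc-indicator profiles the resulting $\xi$ is pinned to $[1/2, 1]$ and the natural piecewise-linear $F$ saturating the integral constraint gives $\Phi(3/4) = 1/12$, far below the required $1/4$. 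Since $\tau \leq 1 + t / m(G)$ always puts you in the wraparound regime you acknowledge not having treated, the proposal as written does not establish the lemma even modulo the circularity.
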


For any characteristic functions $\phi_1$ and $\phi_2$ on a compact connected group $G$, Lemma \ref{lem:ft-convolution-L1-L-infinity} was essentially proved by Tao \cite[Theorem 3.2.3]{tao2012spending} and explicitly given by Christ \cite[Section 6]{MR3952692}.
Christ also proved Lemma \ref{lem:ft-convolution-L1-L-infinity} for any characteristic functions $\phi_1$ and $\phi_2$ on $G = \mathbb{R}^n$, and discussed a relation between Lemma \ref{lem:ft-convolution-L1-L-infinity} and the Riesz--Sobolev inequality \cite{MR1574064} \cite{sobolev1938theorem}.

A claim similar to Lemma \ref{lem:ft-convolution-L1-L-infinity} for any cyclic group of prime order was proved by Pollard \cite{MR354517}, and for any finite abelian group by Green and Ruzsa \cite[Proposition 6.1]{MR2166359}.
By using this claim, Green and Ruzsa explicitly computed the maximal cardinality of all sum-free subsets for any finite abelian group \cite[Theorem 1.5]{MR2166359}.

In addition, when $\phi_1$ and $\phi_2$ are characteristic functions, we obtain the following corollary by applying Theorem \ref{thm:convex-convolution-L1-L-infinity} to
\begin{align}
  f (y) :=
  \left\{
  \begin{aligned}
     & 0   &  & \text{if} \; y = 0 \\
     & - 1 &  & \text{if} \; y > 0
  \end{aligned}
  \right. . \label{eq:f-apply-Kemperman}
\end{align}

\begin{corollary}
  \label{cor:Kemperman-strong}

  Let $B_1 , B_2 \subset G$ be measurable sets on a unimodular locally compact group $G$ with $\mathrm{vol} (B_1), \mathrm{vol} (B_2) >0$.
  If $\mathrm{vol} (B_1) + \mathrm{vol} (B_2) \leq m (G)$ holds, then one has
  \begin{align}
    \mathrm{vol} ( \{ g \in G \mid 1_{B_1} * 1_{B_2} (g) > 0 \} )
    \geq \mathrm{vol} (B_1) + \mathrm{vol} (B_2), \label{eq:Kemperman-strong-state}
  \end{align}
  where $1_B$ denotes the characteristic function of $B$.
  In particular,
  \begin{equation}
    \mathrm{vol}_* (B_1 B_2) \geq \mathrm{vol} (B_1) + \mathrm{vol} (B_2) \label{eq:Kemperman}
  \end{equation}
  holds, where $\mathrm{vol}_*$ denotes the inner measure.

\end{corollary}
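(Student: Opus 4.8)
The plan is to apply Theorem~\ref{thm:convex-convolution-L1-L-infinity} to the characteristic functions $\phi_1 = 1_{B_1}$ and $\phi_2 = 1_{B_2}$ with the function $f$ defined in \eqref{eq:f-apply-Kemperman}. Before doing so, I would record two preliminary points. First, $f$ is admissible: clearly $f(0) = 0$, and $f$ is convex on $[0, \mathrm{vol}(B_1)]$ because the only nontrivial instance of the convexity inequality joins $0$ to a point $y > 0$, where for $\lambda \in [0,1)$ it reads $-1 = f((1-\lambda)y) \le \lambda f(0) + (1-\lambda) f(y) = \lambda - 1$, which holds. Second, since $\mathrm{vol}(B_1) > 0$ we have $\|\phi_1\| = \mathrm{vol}(B_1) > 0$, so the domain of $f$ is nondegenerate and $f(\|\phi_1\|) = -1$.

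Next I would reduce to the case $\mathrm{vol}(B_1) \le \mathrm{vol}(B_2)$ required by the hypothesis $\|\phi_1\| \le \|\phi_2\|$. This needs care, because on a non-abelian $G$ neither the convolution nor the product set $B_1 B_2$ is symmetric in $B_1, B_2$, so a naive swap is not available. Instead, substituting $g' \mapsto g'^{-1}$ in the defining integral and using unimodularity yields the identity $1_{B_1} * 1_{B_2}(g) = 1_{B_2^{-1}} * 1_{B_1^{-1}}(g^{-1})$. Hence the set $S := \{ g \mid 1_{B_1} * 1_{B_2}(g) > 0 \}$ is the inverse of the corresponding set for the pair $(B_2^{-1}, B_1^{-1})$, so the two have equal volume by unimodularity; moreover the hypotheses are invariant under $B_i \mapsto B_i^{-1}$ together with swapping, since $\mathrm{vol}(B_i^{-1}) = \mathrm{vol}(B_i)$ and \eqref{eq:m-large} is preserved. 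Thus I may assume $\mathrm{vol}(B_1) \le \mathrm{vol}(B_2)$.

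Now I would carry out the computation. Since $f \circ (\phi_1 * \phi_2)$ equals $-1$ exactly on $S$ and $0$ elsewhere, the left-hand side of \eqref{eq:convex-convolution-inequality} is $-\mathrm{vol}(S)$. For the right-hand side, $\int_0^{\mathrm{vol}(B_1)} f(y)\,dy = -\mathrm{vol}(B_1)$ and $f(\mathrm{vol}(B_1)) = -1$, so it equals $2(-\mathrm{vol}(B_1)) - (\mathrm{vol}(B_2) - \mathrm{vol}(B_1)) = -\mathrm{vol}(B_1) - \mathrm{vol}(B_2)$. Theorem~\ref{thm:convex-convolution-L1-L-infinity} therefore gives $-\mathrm{vol}(S) \le -\mathrm{vol}(B_1) - \mathrm{vol}(B_2)$, which is exactly \eqref{eq:Kemperman-strong-state}; the degenerate case $\mathrm{vol}(S) = \infty$ makes the conclusion trivial.

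Finally, for the ``in particular'' claim \eqref{eq:Kemperman}, I would observe that $S \subseteq B_1 B_2$: if $1_{B_1} * 1_{B_2}(g) > 0$, then the integrand $g' \mapsto 1_{B_1}(g') 1_{B_2}(g'^{-1} g)$ is nonzero on a set of positive measure, in particular at some $g'$, whence $g' \in B_1$, $g'^{-1} g \in B_2$, and $g = g'(g'^{-1} g) \in B_1 B_2$. Since $S$ is measurable (the convolution of two $L^1$ functions is measurable by Tonelli), it is a measurable subset of $B_1 B_2$, so $\mathrm{vol}_*(B_1 B_2) \ge \mathrm{vol}(S) \ge \mathrm{vol}(B_1) + \mathrm{vol}(B_2)$. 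The main obstacle I anticipate is precisely the non-commutative reduction step, i.e.\ justifying the inverse-symmetry identity cleanly rather than invoking a symmetry that fails on a general $G$; everything else is bookkeeping resting on Theorem~\ref{thm:convex-convolution-L1-L-infinity}.
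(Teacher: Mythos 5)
Your proposal is correct and takes essentially the same route as the paper, which simply applies Theorem~\ref{thm:convex-convolution-L1-L-infinity} with $\phi_i = 1_{B_i}$ and the function $f$ of \eqref{eq:f-apply-Kemperman}, then uses $\{ g \in G \mid 1_{B_1} * 1_{B_2} (g) > 0 \} \subset B_1 B_2$ for \eqref{eq:Kemperman}. Your inversion-and-swap reduction to the case $\mathrm{vol}(B_1) \leq \mathrm{vol}(B_2)$ is a detail the paper leaves implicit (its Remark~\ref{rem:convex-convolution} instead records the symmetric version of the theorem for $\| \phi_2 \| \leq \| \phi_1 \|$), and your verification of it via unimodularity is sound.
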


The Brunn--Minkowski--Kemperman inequality \eqref{eq:Kemperman} follows from \eqref{eq:Kemperman-strong-state} and the fact that $\{ g \in G \mid 1_{B_1} * 1_{B_2} (g) > 0 \} \subset B_1 B_2$.
Corollary \ref{cor:Kemperman-strong} was proved by Brascamp and Lieb for $G=\mathbb{R}^n$ \cite[Theorem A.1]{MR0450480}, and by Tao for any compact connected group $G$ by using Lemma 1.2 \cite[Section 3.2]{tao2012spending}.
This argument is essentially equivalent to applying Theorem \ref{thm:convex-convolution-L1-L-infinity} to \eqref{eq:f-apply-Kemperman}.
The inequality \eqref{eq:Kemperman} was proved by Kemperman for any general unimodular locally compact group $G$, and a number of people for particular cases (see Table \ref{tab:Kemperman-author}).

\begin{table}

  \centering
  \caption{The authors who proved \eqref{eq:Kemperman} for various cases}
  \label{tab:Kemperman-author}
  \begin{tabular}{c|c}
    Unimodular locally compact group $G$        & Authors and references                                  \\
    \hline
    $\mathbb{R}^n$ ($B_1$ and $B_2$ are convex) & Brunn \cite{brunn1887ovale}, Minkowski \cite{MR0249269} \\
    \multirow{2}{*}{$\mathbb{R}^n$}             & Lusternik \cite{lusternik1935brunn},                    \\
                                                & Henstock--Macbeath \cite[Theorem 1]{MR56669}            \\
    $S^1$                                       & Raikov \cite{MR0001776}                                 \\
    $(S^1)^n$                                   & Macbeath \cite[Theorem 1]{MR56670}                      \\
    Compact connected second countable abelian  & Shields \cite{MR72201}                                  \\
    Abelian                                     & Kneser \cite{MR81438}                                   \\
    General                                     & Kemperman \cite[Theorem 1.2]{MR202913}                  \\
    Connected or with $m (G) = \infty$          & Ruzsa \cite[Theorem 2]{MR1173766}                       \\
  \end{tabular}

\end{table}

For $G = \mathbb{R}^n$, \eqref{eq:Kemperman} can be improved to
\begin{align}
  \mathrm{vol}_* (B_1 B_2)^{1/n} \geq \mathrm{vol} (B_1)^{1/n} + \mathrm{vol} (B_2)^{1/n}. \label{eq:Brunn-Minkowski}
\end{align}
For any convex sets $B_1$ and $B_2$ on $G = \mathbb{R}^n$, Brunn proved \eqref{eq:Brunn-Minkowski}, and Minkowski proved that the equality is attained in \eqref{eq:Brunn-Minkowski} if and only if $B_1$ and $B_2$ are equal up to dilatation.
For arbitrary measurable sets $B_1$ and $B_2$ on $G = \mathbb{R}^n$, the inequality \eqref{eq:Brunn-Minkowski} was stated by Lusternik.
However, Lusternik's proof is defective, and it was corrected by Henstock and Macbeath.
A generalization of \eqref{eq:Brunn-Minkowski} for any locally compact group $G$ was studied by McCrudden \cite[Theorem 1.2]{MR233921} and Jing, Tran, and Zhang \cite[Theorem 1.1]{jing2021nonabelian}.

When $G$ is connected or $m (G) = \infty$ holds, Ruzsa gave an alternative proof of \eqref{eq:Kemperman}.
Kemperman's idea is to study the sets $B_1$ and $B_2$ such that the maximum of $\mathrm{vol} (B_1) + \mathrm{vol} (B_2) - \mathrm{vol}_* ( B_1 B_2 )$ is attained.
Ruzsa's idea is to show that $T(I) := \inf_{\mathrm{vol} (B_1) = I} \mathrm{vol}_* ( B_1 B_2 )$ is a concave function for any fixed $B_2$.

For any compact connected abelian group $G$, Tao studied a property of measurable sets for which the equality of Corollary \ref{cor:Kemperman-strong} is almost attained \cite[Theorem 1.3]{MR3920221}.
This result is generalized for disconnected groups by Griesmer \cite[Theorem 1.1]{MR4042161}, and for non-abelian groups by Jing and Tran \cite[Theorem 1.2]{jing2020minimal}.
A sharper result for any compact abelian group is given by Christ and Iliopoulou \cite[Theorem 1.4]{MR4494185}.

Here is the strategy of proof of Theorem \ref{thm:convex-convolution-L1-L-infinity}.
As mentioned above, Lemma \ref{lem:ft-convolution-L1-L-infinity} under the assumption \eqref{eq:m-large} is an example of Theorem \ref{thm:convex-convolution-L1-L-infinity}.
However, we will show Lemma \ref{lem:ft-convolution-L1-L-infinity} without using Theorem \ref{thm:convex-convolution-L1-L-infinity} at first, and obtain Theorem \ref{thm:convex-convolution-L1-L-infinity} by using Lemma \ref{lem:ft-convolution-L1-L-infinity}.
We will show Lemma \ref{lem:ft-convolution-L1-L-infinity} by generalizing the argument of Tao \cite[Theorem 3.2.3]{tao2012spending}, and Theorem \ref{thm:convex-convolution-L1-L-infinity} by approximating $f$ by a linear combination of $f_t$.

The organization of this paper is as follows.
In Section \ref{sec:convex-convolution-example}, we will give some examples and remarks concerning Theorem \ref{thm:convex-convolution-L1-L-infinity}.
Sections \ref{sec:assumption-necessary}-\ref{sec:main-ft-m-finite} are devoted to the proof of Lemma \ref{lem:ft-convolution-L1-L-infinity}.
In Section \ref{sec:assumption-necessary}, we will discuss the necessity of \eqref{eq:m-large}.
In Section \ref{sec:S-property}, we will give a key lemma (Lemma \ref{lem:S-property}) to prove Lemma \ref{lem:ft-convolution-L1-L-infinity}.
By using the arguments in Sections \ref{sec:assumption-necessary} and \ref{sec:S-property}, we will show Lemma \ref{lem:ft-convolution-L1-L-infinity} in the case of $m (G) = \infty$ in Section \ref{sec:main-ft-m-infinite}, and in the case of $m (G) < \infty$ in Section \ref{sec:main-ft-m-finite}.
In Section \ref{sec:f-general}, we will complete the proof of Theorem \ref{thm:convex-convolution-L1-L-infinity} by using Lemma \ref{lem:ft-convolution-L1-L-infinity}.

\section{Some examples and remarks related to Theorem \ref{thm:convex-convolution-L1-L-infinity}}
\label{sec:convex-convolution-example}

In this section, we give some examples and remarks concerning Theorem \ref{thm:convex-convolution-L1-L-infinity} (and related results).
At first, we give some examples of application of Theorem \ref{thm:convex-convolution-L1-L-infinity}.

\begin{example}
  \label{ex:main}

  \begin{enumerate}
    \item \label{item:main-psi0}
          One has $m (G) = \infty$ for $G = \mathbb{R}$.
          The function $\phi_{(I)} \colon \mathbb{R} \to \{ 0, 1 \}$ denotes the characteristic function $\phi_{(I)} := 1_{( -I/2 , I/2)}$ of the open interval $( -I/2 , I/2)$ for $I > 0$.
          Let $\psi_{I_1,I_2} := \phi_{(I_1)} * \phi_{(I_2)}$ for $I_1,I_2 > 0$.
          If $I_1 \leq I_2$, then one has
          \begin{align*}
            \psi_{I_1,I_2} (x) =
            \left\{
            \begin{aligned}
               & I_1      &  & \text{if} \; |x| \leq c         \\
               & c' - |x| &  & \text{if} \; c \leq |x| \leq c' \\
               & 0        &  & \text{if} \; c' \leq |x|
            \end{aligned}
            \right.
            , \quad
            c := \frac{I_2 - I_1}{2}, \quad
            c' := \frac{I_1 + I_2}{2}.
          \end{align*}
          Thus, if $\phi_1 = \phi_{(I_1)}$ and $\phi_2 = \phi_{(I_2)}$, then one has
          \begin{align}
            \int_{\mathbb{R}}^{} f \circ \psi_{I_1,I_2} (x) dx
             & = \int_{-c'}^{-c} f \left( x + c' \right) dx + \int_{- c}^{c} f (I_1) dx + \int_{c}^{c'} f \left( c' - x \right) dx \notag \\
             & = 2 \int_{0}^{I_1} f(y) dy + ( I_2 - I_1 ) f( I_1 ). \label{eq:tilde-culculation}
          \end{align}
          Since $I_1 = \| \phi_1 \|$ and $I_2 = \| \phi_2 \|$, the equality is attained in \eqref{eq:convex-convolution-inequality}.
          That is, \eqref{eq:convex-convolution-inequality} can be expressed as
          \begin{align*}
            \int_{G}^{} f \circ ( \phi_1 * \phi_2 ) (g) dg
            \leq \int_{\mathbb{R}}^{} f \circ \psi_{\| \phi_1 \| , \| \phi_2 \|} (x) dx.
          \end{align*}
          (We note that $\phi_1$ and $\phi_2$ are not the same as $\phi_1 = \phi_{(I_1)}$ and $\phi_2 = \phi_{(I_2)}$ stated before and are already general functions on a general unimodular locally compact group $G$.)
          In particular, \eqref{eq:ft-convolution} can be expressed as
          \begin{equation}
            \| f_t \circ ( \phi_1 * \phi_2 ) \|
            \leq \| f_t \circ \psi_{\| \phi_1 \| , \| \phi_2 \|} \|. \label{eq:ft-convolution-psi0}
          \end{equation}

    \item \label{item:main-Lp-small}
          The function $f(y) := y^p$ is convex for $p \geq 1$.
          Thus, one has
          \begin{align*}
            \int_{G}^{} ( \phi_1 * \phi_2 (g) )^p dg
            \leq 2 \int_{0}^{\| \phi_1 \|} y^p dy + ( \| \phi_2 \| - \| \phi_1 \| ) {\| \phi_1 \|}^p
            = {\| \phi_1 \|}^p \left( \| \phi_2 \| - \frac{(p - 1) \| \phi_1 \|}{p + 1} \right)
          \end{align*}
          by applying Theorem \ref{thm:convex-convolution-L1-L-infinity} in this case.

    \item \label{item:main-Lp-large}
          The function $f(y) := -y^p$ is convex for $0 < p \leq 1$.
          Thus, one has
          \begin{align*}
            \int_{G}^{} ( \phi_1 * \phi_2 (g) )^p dg
            \geq 2 \int_{0}^{\| \phi_1 \|} y^p dy + ( \| \phi_2 \| - \| \phi_1 \| ) {\| \phi_1 \|}^p
            = {\| \phi_1 \|}^p \left( \| \phi_2 \| + \frac{(1 - p) \| \phi_1 \|}{p + 1} \right)
          \end{align*}
          by applying Theorem \ref{thm:convex-convolution-L1-L-infinity} in this case.

    \item \label{item:main-entropy}
          The function $f (y) := y \log y$ is convex.
          Thus, one has
          \begin{align*}
            \int_{G}^{} \phi_1 * \phi_2 (g) \log ( ( \phi_1 * \phi_2 ) (g) ) dg
             & \leq 2 \int_{0}^{\| \phi_1 \|} y \log y dy + ( \| \phi_2 \| - \| \phi_1 \| ) \| \phi_1 \| \log \| \phi_1 \| \\
             & = \| \phi_1 \| \left( \| \phi_2 \| \log \| \phi_1 \| - \frac{\| \phi_1 \|}{2} \right)
          \end{align*}
          by applying Theorem \ref{thm:convex-convolution-L1-L-infinity} in this case.

    \item \label{item:main-ft-example-Fubini}
          Since $f_t (y) = y$ holds for $t = 0$, one has $\| \phi_1 * \phi_2 \| \leq \| \phi_1 \| \cdot \| \phi_2 \|$ by Lemma \ref{lem:ft-convolution-L1-L-infinity}.
          In addition, $- \| \phi_1 * \phi_2 \| \leq - \| \phi_1 \| \| \phi_2 \|$ holds by applying Theorem \ref{thm:convex-convolution-L1-L-infinity} to the convex function $f(y) := -y$.
          Thus, one has
          \begin{equation}
            \| \phi_1 * \phi_2 \| = \| \phi_1 \| \cdot \| \phi_2 \|. \label{eq:Fubini}
          \end{equation}
          By Fubini's theorem, \eqref{eq:Fubini} holds for any $\phi_1 , \phi_2 \colon G \to \mathbb{R}_{\geq 0}$ even when the assumption \eqref{eq:m-large} is not satisfied.
          That is, the following lemma holds.

  \end{enumerate}

\end{example}

\begin{lemma}
  \label{lem:Fubini}

  One has \eqref{eq:Fubini} for any integrable functions $\phi_1 , \phi_2 \colon G \to \mathbb{R}_{\geq 0}$ on a unimodular locally compact group $G$.

\end{lemma}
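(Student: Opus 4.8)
The plan is to reduce \eqref{eq:Fubini} to Tonelli's theorem, exploiting that the integrand is nonnegative and that the Haar measure $dg$ is left-invariant. First I would observe that since $\phi_1 , \phi_2 \geq 0$, the convolution $\phi_1 * \phi_2$ is itself nonnegative wherever it is defined, so that $\| \phi_1 * \phi_2 \| = \int_{G} \phi_1 * \phi_2 (g) \, dg$ with no absolute value to handle. Unfolding the definition of the convolution, this becomes the double integral
\begin{align*}
  \| \phi_1 * \phi_2 \| = \int_{G} \int_{G} \phi_1 (g') \phi_2 (g'^{-1} g) \, dg' \, dg .
\end{align*}

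Next I would apply Tonelli's theorem to interchange the order of integration. The integrand $(g', g) \mapsto \phi_1 (g') \phi_2 (g'^{-1} g)$ is nonnegative and measurable on $G \times G$, since the group operations are continuous and hence the map $(g', g) \mapsto g'^{-1} g$ is Borel measurable; thus Tonelli applies and yields
\begin{align*}
  \int_{G} \int_{G} \phi_1 (g') \phi_2 (g'^{-1} g) \, dg' \, dg = \int_{G} \phi_1 (g') \left( \int_{G} \phi_2 (g'^{-1} g) \, dg \right) dg' .
\end{align*}

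For the inner integral I would invoke the left-invariance of the Haar measure: for each fixed $g'$, the function $g \mapsto \phi_2 (g'^{-1} g)$ is the left translate of $\phi_2$ by $g'$, so $\int_{G} \phi_2 (g'^{-1} g) \, dg = \int_{G} \phi_2 (g) \, dg = \| \phi_2 \|$. Substituting this back gives $\int_{G} \phi_1 (g') \| \phi_2 \| \, dg' = \| \phi_1 \| \| \phi_2 \|$, which is exactly \eqref{eq:Fubini}. As a byproduct, since $\phi_1$ and $\phi_2$ are integrable the right-hand side is finite, so Tonelli's theorem simultaneously guarantees that $\phi_1 * \phi_2 (g)$ is finite for almost every $g$ and defines an integrable function, confirming that the convolution is well-defined.

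The computation is routine; the only point requiring care is the application of Tonelli's theorem, because the Haar measure on a general locally compact group need not be $\sigma$-finite. I would therefore either invoke a form of Tonelli's theorem valid for nonnegative measurable functions on a product of Radon measure spaces, or else restrict attention to the open $\sigma$-compact (hence $\sigma$-finite) subgroup generated by the supports of $\phi_1$ and $\phi_2$, outside of which both functions vanish almost everywhere. I expect this measure-theoretic bookkeeping, rather than any genuine difficulty, to be the main thing to get right; note also that unimodularity is not actually needed here, as only left-invariance of $dg$ is used.
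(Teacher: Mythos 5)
Your proposal is correct and is essentially the paper's own argument: the paper simply invokes Fubini's theorem in Example \ref{ex:main} \ref{item:main-ft-example-Fubini}, and your Tonelli-plus-left-invariance computation is the standard way to make that precise. The extra care you take about $\sigma$-finiteness (restricting to a $\sigma$-compact subgroup carrying the supports) is a reasonable bit of bookkeeping that the paper leaves implicit.
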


The equalities are attained in Lemma \ref{lem:ft-convolution-L1-L-infinity} and Example \ref{ex:main} \ref{item:main-Lp-small}, \ref{item:main-Lp-large} and \ref{item:main-entropy} for $G = \mathbb{R}$, $\phi_1 = \phi_{(\| \phi_1 \|)}$ and $\phi_2 = \phi_{(\| \phi_2 \|)}$ by Example \ref{ex:main} \ref{item:main-psi0}.

Next, we define the notion of convex function used in Theorem \ref{thm:convex-convolution-L1-L-infinity}.

\begin{definition}
  \label{def:convex}

  For a function $f \colon A \to \mathbb{R}$ on an interval $A$ and $y_1 , y_2 \in A$ with $y_1 < y_2$, the functions $\tilde{f}_{y_1}^{y_2} , \hat{f}_{y_1}^{y_2} \colon A \to \mathbb{R}$ are defined as
  \begin{align*}
    \tilde{f}_{y_1}^{y_2} (y) & := \frac{(y_2 - y) f(y_1) + (y - y_1) f(y_2)}{y_2 - y_1} &  & \text{and} &
    \hat{f}_{y_1}^{y_2} (y)   & := \tilde{f}_{y_1}^{y_2} (y) - f(y).
  \end{align*}
  A function $f \colon A \to \mathbb{R}$ is called convex if $\hat{f}_{y_1}^{y_2} (y) \geq 0$ (that is, $f (y) \leq \tilde{f}_{y_1}^{y_2} (y)$) for any $y_1, y_2, y \in A$ with $y_1 < y_2$ and $y_1 \leq y \leq y_2$.

\end{definition}

Here are some remarks on Theorem \ref{thm:convex-convolution-L1-L-infinity} and Corollary \ref{cor:Kemperman-strong}.

\begin{remark}
  \label{rem:convex-convolution}

  \begin{enumerate}
    \item \label{item:convex-convolution-well-defined}
          The composition $f \circ ( \phi_1 * \phi_2)$ in Theorem \ref{thm:convex-convolution-L1-L-infinity} is well-defined because
          \begin{equation}
            \phi_1 * \phi_2 (g)
            = \int_{G}^{} \phi_1 (g') \phi_2 (g'^{-1} g) dg'
            \leq \int_{G}^{} \phi_1 (g') dg'
            = \| \phi_1 \| \label{eq:psi-leq-I1}
          \end{equation}
          holds for any $g \in G$.

    \item
          When $f$ is of $C^2$-class, $f$ is convex if and only if $f'' \geq 0$ holds.
          However, Theorem \ref{thm:convex-convolution-L1-L-infinity} does not require $f$ to be differentiable or even continuous (at $0$ and $\| \phi_1 \|$).

    \item \label{item:convex-convolution-connected-component}
          The following three conditions are equivalent for any locally compact group $G$:

          \begin{enumerate}
            \item \label{item:mG-finite-main}
                  One has $m (G) < \infty$;

            \item \label{item:mG-finite-open-compact}
                  The locally compact group $G$ has an open compact subgroup;

            \item \label{item:mG-finite-G0-compact}
                  The identity component $G_0$ of $G$ is compact.

          \end{enumerate}

          The equivalence of \ref{item:mG-finite-main} and \ref{item:mG-finite-open-compact} is obtained by the fact that $\mathrm{vol} (G') < \infty$ holds if and only if $G'$ is compact \cite{MR0005741}.
          We show that \ref{item:mG-finite-open-compact} implies \ref{item:mG-finite-G0-compact}.
          There is an open compact subgroup $G' \subset G$ by \ref{item:mG-finite-open-compact}.
          Since $G_0 \subset G'$ holds by the connectedness of $G_0$, one has \ref{item:mG-finite-G0-compact} by the compactness of $G'$.
          We show that \ref{item:mG-finite-G0-compact} implies \ref{item:mG-finite-open-compact}.
          Since $G / G_0$ is a totally disconnected locally compact group \cite[Theorem 7.3]{MR551496}, there is an open compact subgroup $H \subset G / G_0$ \cite{van1931studien} \cite[Theorem 7.7]{MR551496}.
          Then $\pi^{-1} (H) \subset G$ is an open subgroup, where $\pi \colon G \to G /G_0$ is the projection.
          Since $\pi^{-1} (H)$ is compact by \ref{item:mG-finite-G0-compact} \cite[Note 5.24 (a)]{MR551496}, we obtain \ref{item:mG-finite-open-compact}.

          In addition,
          \begin{align*}
            G_0 = \bigcap_{G' \in \mathcal{A}} G'
          \end{align*}
          holds for any $G$ \cite[Theorem 7.8]{MR551496}.
          Thus, if the equivalent conditions \ref{item:mG-finite-main}, \ref{item:mG-finite-open-compact} and \ref{item:mG-finite-G0-compact} are satisfied, then we have $m (G) = \mathrm{vol} (G_0)$.
          Thus, \eqref{eq:m-large} holds if and only if at least one of the following two conditions is satisfied:

          \begin{enumerate}[label=(\alph*)]
            \item \label{item:m-large-vol-G0}
                  One has $\| \phi_1 \| + \| \phi_2 \| \leq \mathrm{vol} (G_0)$;

            \item \label{item:m-large-G0-non-compact}
                  The identity component $G_0$ is not compact.

          \end{enumerate}

          In particular, if $G_0$ is open (e.g., $G$ is a Lie group), then \eqref{eq:m-large} and \ref{item:m-large-vol-G0} are equivalent.

    \item
          A result similar to Theorem \ref{thm:convex-convolution-L1-L-infinity} holds in the case of $\| \phi_2 \| \leq \| \phi_1 \|$ as follows.
          For integrable functions $\phi_1 , \phi_2 \colon G \to [0,1]$ with $\| \phi_2 \| \leq \| \phi_1 \|$ and \eqref{eq:m-large},
          \begin{equation}
            \phi_1 * \phi_2 (g)
            = \int_{G}^{} \phi_1 (g') \phi_2 (g'^{-1} g) dg'
            \leq \int_{G}^{} \phi_2 (g'^{-1} g) dg'
            = \| \phi_2 \| \label{eq:psi-leq-I2}
          \end{equation}
          holds and hence $f \circ ( \phi_1 * \phi_2)$ is well-defined for any function $f \colon [0, \| \phi_2 \| ] \to \mathbb{R}$.
          Then, one has
          \begin{align*}
            \int_{G}^{} f \circ ( \phi_1 * \phi_2 ) (g) dg \leq 2 \int_{0}^{\| \phi_2 \|} f(y) dy + ( \| \phi_1 \| - \| \phi_2 \| ) f( \| \phi_2 \| )
          \end{align*}
          for any convex function $f \colon [0, \| \phi_2 \| ] \to \mathbb{R}$ with $f(0) = 0$ by the argument similar to Theorem \ref{thm:convex-convolution-L1-L-infinity}.

    \item
          In Corollary \ref{cor:Kemperman-strong}, we set $U := \{ g \in G \mid 1_{B_1} * 1_{B_2} (g) > 0 \}$.
          In fact, there is a case where $\mathrm{vol}_* (B_1 B_2) > \mathrm{vol} ( U )$.
          For example, when $G = \mathbb{R}$, $B_1 = (0,1) \cup \{ 2 \}$ and $B_2 = (0,1)$ hold, we have $B_1 B_2 = (0,3) \setminus \{ 2 \}$ and $U = (0,2)$.
          Since $\mathrm{vol} (B_1) = \mathrm{vol} (B_2) = 1$, $\mathrm{vol}_* (B_1 B_2) = 3$ and $\mathrm{vol} ( U ) = 2$ hold, we have
          \begin{align*}
            \mathrm{vol}_* (B_1 B_2)
            > \mathrm{vol} ( U )
            = \mathrm{vol} (B_1) + \mathrm{vol} (B_2).
          \end{align*}

    \item \label{item:convex-convolution-normalize}

          If $\| \phi_1 \| = 0$, then we have $\phi_1 * \phi_2 = 0$ and hence Theorem \ref{thm:convex-convolution-L1-L-infinity} and Lemma \ref{lem:ft-convolution-L1-L-infinity} hold.
          Thus, it suffices to show Theorem \ref{thm:convex-convolution-L1-L-infinity} and Lemma \ref{lem:ft-convolution-L1-L-infinity} for $\| \phi_1 \| > 0$.
          In this case, one can reduce the proof to the case of $\| \phi_1 \| = 1$ by replacing $dg$ and $f(y)$ with $dg / \| \phi_1 \|$ and $f ( y / \| \phi_1 \| )$, respectively.

  \end{enumerate}

\end{remark}

\section{Necessity of the assumption \texorpdfstring{\eqref{eq:m-large}}{(1.1)}}
\label{sec:assumption-necessary}

In this section, we discuss the necessity of the assumption \eqref{eq:m-large} when $G$ is connected.
That is, if \eqref{eq:m-large} does not hold, then \eqref{eq:ft-convolution} does not hold for any $0 < t < \| \phi_1 \| + \| \phi_2 \| - m (G)$ by the following lemma.

\begin{lemma}
  \label{lem:m-small-counterexample}

  Let $\phi_1 , \phi_2 \colon G \to [0,1]$ be integrable functions on a unimodular locally compact group $G$ with $G = G_0$ (that is, $G$ is connected).

  \begin{enumerate}
    \item \label{item:m-small-counterexample-psi-large}
          One has $\phi_1 * \phi_2 \geq \| \phi_1 \| + \| \phi_2 \| - m(G)$.

    \item \label{item:m-small-counterexample-ft-norm}
          If $\| \phi_1 \| + \| \phi_2 \| - m(G) \geq 0$ holds, then one has $\| f_t \circ ( \phi_1 * \phi_2 ) \| = \| \phi_1 \| \| \phi_2 \| - t m(G)$ for any $0 \leq t \leq \| \phi_1 \| + \| \phi_2 \| - m(G)$.

  \end{enumerate}

\end{lemma}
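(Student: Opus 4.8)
The plan is to reduce everything to the elementary pointwise inequality $ab \ge a+b-1$ for $a,b\in[0,1]$, after first pinning down $m(G)$ for a connected group. Since $G=G_0$ is connected, any open subgroup $G'\subset G$ is also closed (its complement is a union of cosets, each of which is open), hence clopen, so $G'=G$; therefore $\mathcal{A}=\{G\}$ and $m(G)=\mathrm{vol}(G)$. If $G$ is non-compact this equals $+\infty$, in which case part \ref{item:m-small-counterexample-psi-large} is the vacuous bound $\phi_1*\phi_2\ge-\infty$, and the hypothesis $\|\phi_1\|+\|\phi_2\|-m(G)\ge 0$ of part \ref{item:m-small-counterexample-ft-norm} cannot hold for integrable $\phi_i$; so all content lies in the compact case, where $m(G)=\mathrm{vol}(G)<\infty$.

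For part \ref{item:m-small-counterexample-psi-large}, I would fix $g\in G$ and write $\phi_1*\phi_2(g)=\int_{G}\phi_1(g')\phi_2(g'^{-1}g)\,dg'$, then apply the pointwise estimate $\phi_1(g')\phi_2(g'^{-1}g)\ge \phi_1(g')+\phi_2(g'^{-1}g)-1$, which is valid because both factors lie in $[0,1]$ and $(1-a)(1-b)\ge 0$. Integrating in $g'$ and using unimodularity to evaluate $\int_{G}\phi_2(g'^{-1}g)\,dg'=\|\phi_2\|$ (substitute $g'\mapsto g'^{-1}$, which preserves $dg'$ on a unimodular group, followed by a right translation by $g$) yields $\phi_1*\phi_2(g)\ge \|\phi_1\|+\|\phi_2\|-\mathrm{vol}(G)=\|\phi_1\|+\|\phi_2\|-m(G)$, as desired.

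For part \ref{item:m-small-counterexample-ft-norm}, the range $0\le t\le \|\phi_1\|+\|\phi_2\|-m(G)$ together with part \ref{item:m-small-counterexample-psi-large} gives $\phi_1*\phi_2(g)\ge t$ for every $g$, so $f_t\circ(\phi_1*\phi_2)(g)=\phi_1*\phi_2(g)-t$ identically (and this is nonnegative). Because the hypothesis forces $m(G)=\mathrm{vol}(G)<\infty$, I may integrate term by term to obtain $\|f_t\circ(\phi_1*\phi_2)\|=\|\phi_1*\phi_2\|-t\,\mathrm{vol}(G)$; substituting $\|\phi_1*\phi_2\|=\|\phi_1\|\|\phi_2\|$ from Lemma \ref{lem:Fubini} and $\mathrm{vol}(G)=m(G)$ then gives exactly $\|\phi_1\|\|\phi_2\|-t\,m(G)$.

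I expect no serious obstacle here; the only points needing care are the identification $m(G)=\mathrm{vol}(G)$ for connected $G$, the unimodular change of variables in the evaluation of $\int_{G}\phi_2(g'^{-1}g)\,dg'$, and confirming finiteness of $\mathrm{vol}(G)$ under the hypothesis of part \ref{item:m-small-counterexample-ft-norm} so that the subtraction $\|\phi_1*\phi_2\|-t\,\mathrm{vol}(G)$ is meaningful.
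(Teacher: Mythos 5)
Your proof is correct and follows essentially the same route as the paper's: the pointwise bound $ab\ge a+b-1$ for $a,b\in[0,1]$ integrated over $G$ for part \ref{item:m-small-counterexample-psi-large}, and the observation that $f_t\circ(\phi_1*\phi_2)=\phi_1*\phi_2-t$ combined with Lemma \ref{lem:Fubini} for part \ref{item:m-small-counterexample-ft-norm}. The only cosmetic difference is that you justify $m(G)=\mathrm{vol}(G)$ directly from connectedness (and dispose of the non-compact case explicitly) where the paper cites Remark \ref{rem:convex-convolution} \ref{item:convex-convolution-connected-component}.
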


\begin{proof}
  \begin{enumerate}
    \item
          For any $g \in G$,
          \begin{align*}
            \phi_1 * \phi_2 (g)
            = \int_{G}^{} \phi_1 (g') \phi_2 (g'^{-1} g) dg'.
          \end{align*}
          One has
          \begin{align*}
            \phi_1 (g') \phi_2 (g'^{-1} g)
             & = \phi_1 (g') + \phi_2 (g'^{-1} g) - 1 + (1 - \phi_1 (g') ) ( 1 - \phi_2 (g'^{-1} g)) \\
             & \geq \phi_1 (g') + \phi_2 (g'^{-1} g) - 1
          \end{align*}
          by $\phi_1 (g') , \phi_2 (g'^{-1} g) \leq 1$, and hence
          \begin{align*}
            \int_{G}^{} \phi_1 (g') \phi_2 (g'^{-1} g) dg'
            \geq \int_{G}^{} (\phi_1 (g') + \phi_2 (g'^{-1} g) - 1 ) dg'
            = \| \phi_1 \| + \| \phi_2 \| - \mathrm{vol} (G).
          \end{align*}
          Since $m (G) = \mathrm{vol} (G)$ holds by $G = G_0$ and Remark \ref{rem:convex-convolution} \ref{item:convex-convolution-connected-component}, we obtain
          \begin{align*}
            \phi_1 * \phi_2 (g)
            = \int_{G}^{} \phi_1 (g') \phi_2 (g'^{-1} g) dg'
            \geq \| \phi_1 \| + \| \phi_2 \| - \mathrm{vol} (G)
            = \| \phi_1 \| + \| \phi_2 \| - m (G).
          \end{align*}

    \item
          Since $f_t \circ ( \phi_1 * \phi_2 ) = \phi_1 * \phi_2 - t$ holds by \ref{item:m-small-counterexample-psi-large}, we obtain
          \begin{align*}
            \| f_t \circ ( \phi_1 * \phi_2 ) \|
            = \| \phi_1 * \phi_2 - t \|
            = \| \phi_1 \| \| \phi_2 \| - t m(G)
          \end{align*}
          by Lemma \ref{lem:Fubini}.
          \qedhere
  \end{enumerate}

\end{proof}

By Lemma \ref{lem:m-small-counterexample}, the inequality \eqref{eq:ft-convolution} does not hold for any $0 < t < \| \phi_1 \| + \| \phi_2 \| - m (G)$ when $G$ is connected and \eqref{eq:m-large} does not hold.
In fact,
\begin{align*}
  \| f_t \circ ( \phi_1 * \phi_2 ) \|
  = \| \phi_1 \| \cdot \| \phi_2 \| - t m(G)
  = ( \| \phi_1 \| - t ) ( \| \phi_2 \| - t ) + t ( \| \phi_1 \| + \| \phi_2 \| - m (G) - t )
\end{align*}
holds by Lemma \ref{lem:m-small-counterexample} \ref{item:m-small-counterexample-ft-norm} and hence we have $( \| \phi_1 \| - t ) ( \| \phi_2 \| - t ) < \| f_t \circ ( \phi_1 * \phi_2 ) \|$.

\section{First step of the proof of Lemma \ref{lem:ft-convolution-L1-L-infinity}}
\label{sec:S-property}

In this section, we give the key lemma (Lemma \ref{lem:S-property}) to prove Lemma \ref{lem:ft-convolution-L1-L-infinity}.
This idea is based on the submodularity inequality by Pollard \cite{MR354517}, Ruzsa \cite[Lemma 2.2]{MR1173766}, Green and Ruzsa \cite[Proposition 6.1]{MR2166359} and Tao \cite[Theorem 3.2.3]{tao2012spending} \cite[Lemma 2.1]{MR3082216}.

It suffices to show Lemma \ref{lem:ft-convolution-L1-L-infinity} for $\| \phi_1 \| = 1$ by Remark \ref{rem:convex-convolution} \ref{item:convex-convolution-normalize}.
We fix $\phi_1$ with $\| \phi_1 \| = 1$ and simply write $f$ for $f_t$ in \eqref{eq:ft-define}.
The function $S \colon [0, \mathrm{vol} (G)] \to \mathbb{R}_{\geq 0}$ is defined as
\begin{align*}
  S (I) := \sup_{\phi \in \mathcal{B} (I)} \| \xi_\phi \|,
\end{align*}
where $\mathcal{B} (I) := \{ \phi \colon G \to [0,1] \mid \| \phi \| = I \}$ and $\xi_\phi := f \circ ( \phi_1 * \phi )$.
Lemma \ref{lem:ft-convolution-L1-L-infinity} can be expressed as
\begin{equation}
  S (I) \leq (1-t) (I - t) \label{eq:main-ft-S}
\end{equation}
for any $t \leq I \leq m (G) + t - 1$.
We show the following lemma to prove Lemma \ref{lem:ft-convolution-L1-L-infinity}.

\begin{lemma}
  \label{lem:S-property}

  \begin{enumerate}
    \item \label{item:S-property-Lipschitz}
          One has $0 \leq S(I') - S(I) \leq I'-I$ for any $0 \leq I \leq I' \leq \mathrm{vol} (G)$.
          In particular, $S$ is Lipschitz continuous.

    \item \label{item:S-property-convex}
          The function $S$ is convex on $[0,m (G)]$ (or $[0, \infty)$ in the case of $m (G) = \infty$).

    \item \label{item:S-property-t}
          One has $S(t) =0$.

    \item \label{item:S-property-major}
          One has $S(I) \leq (1-t)I$ for any $0 \leq I \leq \mathrm{vol} (G)$.

  \end{enumerate}

\end{lemma}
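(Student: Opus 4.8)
The plan is to treat the four assertions in increasing order of difficulty, isolating the one genuinely new input (the supermodularity behind part \ref{item:S-property-convex}). Throughout I use that $f=f_t$ is nonnegative, nondecreasing, $1$-Lipschitz, vanishes on $[0,t]$, and satisfies $f_t(y)\le (1-t)y$ on $[0,1]$ (all immediate from \eqref{eq:ft-define}), together with the identity $\|\phi_1*\chi\|=\|\phi_1\|\,\|\chi\|=\|\chi\|$ from Lemma \ref{lem:Fubini} (recall $\|\phi_1\|=1$) and the pointwise bound $\phi_1*\phi\le\|\phi_1\|=1$. For part \ref{item:S-property-Lipschitz}, monotonicity follows by enlarging: given $\phi\in\mathcal{B}(I)$, since $\|\phi\|=I\le I'\le\mathrm{vol}(G)$ there is room to choose $\phi'$ with $\phi\le\phi'\le 1$ and $\|\phi'\|=I'$; then $\phi_1*\phi'\ge\phi_1*\phi$ and monotonicity of $f_t$ give $\xi_{\phi'}\ge\xi_\phi$, whence $S(I')\ge S(I)$. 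For the Lipschitz bound, given $\phi'\in\mathcal{B}(I')$ shrink to $\phi\le\phi'$ with $\|\phi\|=I$; the monotone $1$-Lipschitz property of $f_t$ gives $0\le\xi_{\phi'}-\xi_\phi\le\phi_1*(\phi'-\phi)$ pointwise, so integrating and applying Lemma \ref{lem:Fubini} yields $\|\xi_{\phi'}\|-\|\xi_\phi\|\le\|\phi_1*(\phi'-\phi)\|=I'-I$, and taking suprema gives part \ref{item:S-property-Lipschitz}. Part \ref{item:S-property-t} is immediate, since $\|\phi\|=t$ forces $\phi_1*\phi\le\|\phi\|=t$, hence $\xi_\phi=f_t(\phi_1*\phi)\equiv 0$; and part \ref{item:S-property-major} follows from $f_t(\phi_1*\phi)\le (1-t)\,\phi_1*\phi$ (as $\phi_1*\phi\in[0,1]$), integrated against Lemma \ref{lem:Fubini}.

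The substance is the convexity in part \ref{item:S-property-convex}, whose engine is a \emph{supermodularity} inequality: for any $\phi,\psi\colon G\to[0,1]$,
\[ \|\xi_{\phi\vee\psi}\|+\|\xi_{\phi\wedge\psi}\|\ge\|\xi_\phi\|+\|\xi_\psi\| . \]
To prove this, set $a=\phi_1*\phi$, $b=\phi_1*\psi$, $u=\phi_1*(\phi\vee\psi)$, $v=\phi_1*(\phi\wedge\psi)$. Since $\phi_1\ge 0$, convolution is linear and monotone, so pointwise $u\ge\max(a,b)$, $v\le\min(a,b)$, and $u+v=a+b$; thus $(u,v)$ majorizes $(a,b)$, and convexity of $f_t$ gives $f_t(u)+f_t(v)\ge f_t(a)+f_t(b)$ pointwise. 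Integrating yields the claim.

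I then upgrade this to midpoint convexity by a translation argument. Fix $I_1<I_2$ in $[0,m(G)]$ and put $I=(I_1+I_2)/2$. Given $\varepsilon>0$, choose $\phi\in\mathcal{B}(I)$ with $\|\xi_\phi\|>S(I)-\varepsilon$, and for $h\in G$ set $\psi_h(x):=\phi(xh)$. By right-invariance of Haar measure $\psi_h\in\mathcal{B}(I)$ and $\phi_1*\psi_h=(\phi_1*\phi)(\,\cdot\,h)$, so $\|\xi_{\psi_h}\|=\|\xi_\phi\|$. Writing $J(h):=\|\phi\wedge\psi_h\|\in[0,I]$ (so $\|\phi\vee\psi_h\|=2I-J(h)$) and applying supermodularity to $\phi,\psi_h$ gives
\[ S(J(h))+S(2I-J(h))\ge\|\xi_{\phi\vee\psi_h}\|+\|\xi_{\phi\wedge\psi_h}\|\ge 2\|\xi_\phi\|>2\bigl(S(I)-\varepsilon\bigr) . \]
The map $h\mapsto J(h)$ is continuous (continuity of translation in $L^1$) with $J(e)=I$. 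If I can find $h$ with $J(h)=I_1$, then $2I-J(h)=I_2$ and, letting $\varepsilon\to 0$, I obtain $S(I_1)+S(I_2)\ge 2S(I)$; combined with the continuity from part \ref{item:S-property-Lipschitz}, this gives convexity on $[0,m(G)]$. When $m(G)=\infty$ the identity component $G_0$ is noncompact (Remark \ref{rem:convex-convolution} \ref{item:convex-convolution-connected-component}), so approximating $\phi$ by a compactly supported function and translating far out within $G_0$ makes the supports of $\phi$ and $\psi_h$ disjoint; hence $\inf_{h\in G_0}J(h)=0\le I_1$, and the intermediate value theorem on the connected set $G_0$ supplies the required $h$.

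The \textbf{main obstacle} is the case $m(G)<\infty$, where $G_0$ is compact with $\mathrm{vol}(G_0)=m(G)$. Right translation by $G_0$ preserves each coset $gG_0$, so for a function constant on cosets one has $\psi_h=\phi$ for all $h\in G_0$, giving $J\equiv I$: translation within the connected component cannot lower the overlap, and the intermediate value argument above fails outright. I expect to resolve this by reducing to the discrete quotient $G/G_0$ — where the hypothesis $I_2\le m(G)=\mathrm{vol}(G_0)$ guarantees enough room to separate mass across cosets — and invoking a Pollard/Kneser-type combinatorial estimate there, which is precisely the point at which the constraint that all masses lie in $[0,m(G)]$ is genuinely used.
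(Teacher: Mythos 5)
Parts \ref{item:S-property-Lipschitz}, \ref{item:S-property-t} and \ref{item:S-property-major} of your argument are correct and essentially identical to the paper's. Your supermodularity inequality for $\phi\vee\psi$, $\phi\wedge\psi$ is also valid (it is the min/max specialization of the paper's Lemma \ref{lem:S-convex-proof} \ref{item:S-convex-proof-xi-sum}), and your translation argument does settle part \ref{item:S-property-convex} when $m(G)=\infty$.

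The genuine gap is the case $m(G)<\infty$, which you flag but do not close, and your proposed escape route (pass to the discrete quotient $G/G_0$ and invoke a Pollard/Kneser-type estimate) is not how the difficulty is actually resolved and does not obviously bear on the convexity of $S$ as a function of the continuously varying mass $I$. The paper's resolution has two ingredients you are missing. First, it does not use $\phi\wedge R_g\phi$ but the product $\phi\, R_g\phi$: by Fubini $\int_{G_0}\|\phi R_g\phi\|\,dg\le I^2$, so on the compact identity component (of volume $m(G)$) there is some $g\in G_0$ with $\|\phi R_g\phi\|\le I^2/m(G)<I$. This averaging trick is unavailable for your $J(h)=\|\phi\wedge\psi_h\|$, since $\phi\psi\le\phi\wedge\psi$ for $[0,1]$-valued functions, so a small product norm does not force a small min norm; one must then interpolate a $\nu_1$ with $\phi R_g\phi\le\nu_1\le\min(\phi,R_g\phi)$ and $\|\nu_1\|=I_1'$ (and $\nu_2=\phi+R_g\phi-\nu_1$) to run the convexity comparison, which is exactly Lemma \ref{lem:S-convex-proof} \ref{item:S-convex-proof-nu-exist}--\ref{item:S-convex-proof-xi-sum}. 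Second, this only yields the \emph{local} midpoint inequality $2S(I)\le S(I_1')+S(I_2')$ for $I_1'\in(I^2/m(G),I]$, not for arbitrary symmetric pairs; the paper then globalizes it by a connectedness argument: on $[J_1,J_2]$ the set where $\hat{S}_{J_1}^{J_2}$ attains its minimum $M$ is closed, and the local inequality shows its intersection with $(J_1,J_2)$ is open, forcing $M=0$. Without both of these steps your proof of part \ref{item:S-property-convex} does not go through when $m(G)<\infty$.
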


Lemma \ref{lem:S-property} \ref{item:S-property-convex} is deduced from the following lemma.

\begin{lemma}
  \label{lem:S-convex-proof}

  Let $I > 0$ and $\phi \in \mathcal{B} (I)$.
  We define $R_g \phi \colon G \to [0,1]$ as $R_g \phi (g') := \phi (g' g)$ for $g \in G$.
  Let $I^2/m(G) < I_1' \leq I$ and $I_2' := 2I - I_1'$.

  \begin{enumerate}
    \item \label{item:S-convex-proof-h}
          We define a continuous function $h \colon G \to \mathbb{R}_{\geq 0}$ as $h(g) := \| \phi R_g \phi \|$.
          If $I_1' \leq h (e)$ holds, then there is $g \in G_0$ with $h (g) = I_1'$, where $e \in G$ is the identity.

    \item \label{item:S-convex-proof-nu-exist}
          There are $g \in G_0$ and $\nu_1 \in \mathcal{B} (I_1')$ such that $\phi R_g \phi \leq \nu_1 \leq \min \{ \phi, R_g \phi \}$ holds.

    \item \label{item:S-convex-proof-nu2}
          Let $g \in G_0$ and $\nu_1 \in \mathcal{B} (I_1')$ be as in \ref{item:S-convex-proof-nu-exist}.
          One has $\max \{ \phi , R_g \phi \} \leq \nu_2 := \phi + R_g \phi - \nu_1 \leq 1$ and $\nu_2 \in \mathcal{B} (I_2')$.

    \item \label{item:S-convex-proof-xi-sum}
          Let $g \in G_0$ and $\nu_1 \in \mathcal{B} (I_1')$ be as in \ref{item:S-convex-proof-nu-exist}, and $\nu_2 \in \mathcal{B} (I_2')$ be as in \ref{item:S-convex-proof-nu2}.
          One has $\xi_\phi + \xi_{R_g \phi} \leq \xi_{\nu_1} + \xi_{\nu_2}$.

    \item \label{item:S-convex-proof-xi-norm}
          Let $\nu_1 \in \mathcal{B} (I_1')$ be as in \ref{item:S-convex-proof-nu-exist}, and $\nu_2 \in \mathcal{B} (I_2')$ be as in \ref{item:S-convex-proof-nu2}.
          One has $2 \| \xi_\phi \| \leq \| \xi_{\nu_1} \| + \| \xi_{\nu_2} \|$.
          In particular, $2 S(I) \leq S(I_1') + S(I_2')$ holds.

  \end{enumerate}

\end{lemma}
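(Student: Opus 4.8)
The plan is to establish the five parts in the order stated, since each feeds into the next, and then to read off the submodularity bound $2S(I) \le S(I_1') + S(I_2')$ directly from part \ref{item:S-convex-proof-xi-norm}. For part \ref{item:S-convex-proof-h} I would first record the averaging identity $\int_G h(g)\,dg = \int_G\int_G \phi(g')\phi(g'g)\,dg'\,dg = \|\phi\|^2 = I^2$, which follows from Tonelli and the left-invariance of $dg$. Restricting the integrand to the identity component gives $\int_{G_0} h \le I^2$, and since $\mathrm{vol}(G_0) = m(G)$ in all cases (Remark \ref{rem:convex-convolution}\,\ref{item:convex-convolution-connected-component}, whether or not $G_0$ is compact), the mean value of $h$ over $G_0$ forces $\inf_{G_0} h \le I^2/m(G)$, with the convention $I^2/\infty = 0$. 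Continuity of $h$ comes from viewing $h(g) = \langle \phi, R_g\phi\rangle_{L^2(G)}$ as a matrix coefficient of the right regular representation, which is strongly continuous and, by unimodularity, isometric; here $\phi \in L^2(G)$ because $0 \le \phi \le 1$. Then $\inf_{G_0} h \le I^2/m(G) < I_1' \le h(e)$, together with the connectedness of $G_0 \ni e$ and the intermediate value theorem, yields $g \in G_0$ with $h(g) = I_1'$.

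For part \ref{item:S-convex-proof-nu-exist} I would split into two cases, noting first that pointwise $\phi R_g\phi \le \min(\phi, R_g\phi)$ because the product of two numbers in $[0,1]$ never exceeds their minimum. If $I_1' \le h(e)$, part \ref{item:S-convex-proof-h} supplies $g \in G_0$ with $\|\phi R_g\phi\| = h(g) = I_1'$, and $\nu_1 := \phi R_g\phi$ works. If instead $I_1' > h(e)$, I take $g = e$ and the linear interpolation $\nu_1 := (1-s)\phi^2 + s\phi$; its norm $(1-s)h(e) + sI$ sweeps $[h(e), I] \ni I_1'$ as $s$ runs over $[0,1]$, and for the appropriate $s$ one has $\phi^2 \le \nu_1 \le \phi = \min(\phi, R_e\phi)$ with $\nu_1$ valued in $[0,1]$. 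Part \ref{item:S-convex-proof-nu2} is then algebra: from $\nu_1 \le \min(\phi, R_g\phi)$ and $a+b-\min(a,b) = \max(a,b)$ we get $\nu_2 \ge \max(\phi,R_g\phi) \ge 0$, while $\nu_1 \ge \phi R_g\phi$ together with $a + b - ab = 1 - (1-a)(1-b) \le 1$ gives $\nu_2 \le 1$; additivity of the $L^1$-norm on nonnegative functions and $\|R_g\phi\| = \|\phi\| = I$ give $\|\nu_2\| = 2I - I_1' = I_2'$.

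The heart of the argument is part \ref{item:S-convex-proof-xi-sum}. Convolving with the nonnegative $\phi_1$ is linear and monotone, so writing $a = \phi_1*\phi$, $b = \phi_1*R_g\phi$, $p = \phi_1*\nu_1$, $q = \phi_1*\nu_2$ (all $[0,1]$-valued, hence in the domain of $f = f_t$), the relations from parts \ref{item:S-convex-proof-nu-exist} and \ref{item:S-convex-proof-nu2} propagate to $p \le \min(a,b)$, $q \ge \max(a,b)$, and $p + q = a + b$ pointwise. Thus $(p,q)$ majorizes $(a,b)$, and the standard fact that increments of a convex function are nondecreasing gives $f(a) + f(b) \le f(p) + f(q)$ at every point, i.e.\ $\xi_\phi + \xi_{R_g\phi} \le \xi_{\nu_1} + \xi_{\nu_2}$. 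For part \ref{item:S-convex-proof-xi-norm} I integrate this, which is legitimate since $f_t \ge 0$ makes the norms coincide with the integrals, and use $\phi_1 * R_g\phi = R_g(\phi_1*\phi)$ together with the right-invariance of $dg$ (unimodularity) to see $\|\xi_{R_g\phi}\| = \|\xi_\phi\|$; hence $2\|\xi_\phi\| \le \|\xi_{\nu_1}\| + \|\xi_{\nu_2}\| \le S(I_1') + S(I_2')$, and taking the supremum over $\phi \in \mathcal{B}(I)$ yields $2S(I) \le S(I_1') + S(I_2')$. I expect part \ref{item:S-convex-proof-h} to be the main obstacle: it is where the hypothesis $I^2/m(G) < I_1'$ and the geometry of $G_0$ genuinely enter, and where the two regimes $m(G) < \infty$ and $m(G) = \infty$ must be reconciled into the single averaging bound.
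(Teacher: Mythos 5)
Parts \ref{item:S-convex-proof-nu-exist}--\ref{item:S-convex-proof-xi-norm} of your proposal are correct and essentially identical to the paper's proof: the explicit interpolation $\nu_1=(1-s)\phi^2+s\phi$ in the case $I_1'>h(e)$, the algebra for $\nu_2$, the majorization argument $p\le\min(a,b)\le\max(a,b)\le q$, $p+q=a+b$ $\Rightarrow$ $f(a)+f(b)\le f(p)+f(q)$ (the paper phrases this via the chords $\tilde{f}_{u(\nu_1)}^{u(\nu_2)}$, but it is the same convexity fact), and the identity $\phi_1*R_g\phi=R_g(\phi_1*\phi)$ with unimodularity for part \ref{item:S-convex-proof-xi-norm} all match.

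The gap is exactly where you predicted it: part \ref{item:S-convex-proof-h} in the regime $m(G)=\infty$. Your reconciliation rests on the claim that $\mathrm{vol}(G_0)=m(G)$ ``in all cases,'' but Remark \ref{rem:convex-convolution} \ref{item:convex-convolution-connected-component} asserts this equality only when the equivalent finiteness conditions hold, and it is false in general. If $G_0$ is closed but not open it is a null set of $G$; for example $G=\mathbb{R}\times\mathbb{Q}_p$ has $G_0=\mathbb{R}\times\{0\}$ with $\mathrm{vol}(G_0)=0$ while $m(G)=\infty$. In that situation the averaging bound collapses: with respect to the restriction of $dg$ the integral $\int_{G_0}h$ is trivially $0$ and says nothing about pointwise values of $h$ on $G_0$, and with respect to $G_0$'s own Haar measure the Tonelli step $\int_{G_0}\phi(g''g')\,dg'\le I$ fails, since $\|\phi\|_{L^1(G)}$ does not control $\phi$ on a single null coset. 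Averaging over all of $G$ instead would produce a point $g$ with $h(g)$ small but possibly outside $G_0$, which destroys the intermediate-value step. The paper therefore uses a genuinely different argument when $m(G)=\infty$: choose a compact $K$ with $\|\phi\|_{G\setminus K}\le I_1'/2$, use non-compactness of $G_0$ to pick $g'\in G_0\setminus K^{-1}K$, so that $Kg'\cap K=\emptyset$ and hence $h(g')\le\|R_{g'}\phi\|_K+\|\phi\|_{G\setminus K}\le 2\|\phi\|_{G\setminus K}\le I_1'$. Your averaging argument is the paper's argument for the case $m(G)<\infty$ (where the hypothesis $I^2/m(G)<I_1'$ forces $\mathrm{vol}(G_0)=m(G)>0$, so $G_0$ is open and the computation is legitimate), but it does not extend to the infinite case.
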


\begin{proof}

  \begin{enumerate}
    \item
          Since $h$ is continuous and $G_0$ is connected, it suffices to show that there is $g' \in G_0$ with $h(g') \leq I_1'$.
          We divide the proof into the following two cases.

          \begin{enumerate}
            \item \label{item:S-convex-proof-h-m-finite}
                  The case of $m(G) < \infty$.
                  Then
                  \begin{align*}
                    \| h \|_{G_0}
                    = \int_{G_0}^{} \int_{G}^{} \phi (g'') \phi (g'' g') dg'' dg'
                    \leq I^2
                  \end{align*}
                  holds, where $\| h \|_{G_0}$ denotes the $L^1$-norm of $h$ on $G_0$.
                  Since $\mathrm{vol} (G_0) = m(G)$ by Remark \ref{rem:convex-convolution} \ref{item:convex-convolution-connected-component}, there is $g' \in G_0$ with
                  \begin{align*}
                    h(g') \leq \frac{I^2}{m (G)}
                    \leq I_1'.
                  \end{align*}

            \item \label{item:S-convex-proof-h-m-infinite}
                  The case of $m (G)= \infty$.
                  There is a compact set $K \subset G$ with $\| \phi \|_K \geq I- I_1'/2$.
                  Since $G_0$ is not compact by Remark \ref{rem:convex-convolution} \ref{item:convex-convolution-connected-component}, there is $g' \in G_0 \setminus K^{-1}K$.
                  Then
                  \begin{align*}
                    h (g') = \| \phi R_{g'} \phi \|_K + \| \phi R_{g'} \phi \|_{G \setminus K}
                    \leq \| R_{g'} \phi \|_K + \| \phi \|_{G \setminus K}
                  \end{align*}
                  holds by $\phi , R_{g'} \phi \leq 1$.
                  Since $Kg' \subset G \setminus K$ by $g' \in G_0 \setminus K^{-1}K$, one has
                  \begin{align*}
                    \| R_{g'} \phi \|_K = \| \phi \|_{Kg'} \leq \| \phi \|_{G \setminus K}.
                  \end{align*}
                  Thus, we obtain
                  \begin{align*}
                    h (g')
                    \leq \| R_{g'} \phi \|_K + \| \phi \|_{G \setminus K}
                    \leq 2 \| \phi \|_{G \setminus K}
                    \leq I_1'.
                  \end{align*}

          \end{enumerate}

    \item
          Let $e \in G$ be the identity.
          When $I_1' \leq h (e)$ holds, we obtain
          \begin{align*}
            \nu_1 := \phi R_g \phi \leq \min \{ \phi, R_g \phi \}
          \end{align*}
          by taking $g \in G_0$ in \ref{item:S-convex-proof-h}.
          When $I_1' \geq h (e)$ holds, there is $\nu_1 \in \mathcal{B} (I_1')$ with $\phi^2 \leq \nu_1 \leq \phi$.
          Thus,
          \begin{align*}
            \phi R_g \phi = \phi^2 \leq \nu_1 \leq \min \{ \phi, R_g \phi \} = \phi
          \end{align*}
          holds by taking $g = e$.

    \item
          Since $\nu_1 \leq \min \{ \phi , R_g \phi \}$, one has
          \begin{align*}
            \nu_2 \geq \phi + R_g \phi - \min \{ \phi , R_g \phi \} = \max \{ \phi , R_g \phi \} .
          \end{align*}
          As
          \begin{align*}
            \nu_2
            \leq \phi + R_g \phi - \phi R_g \phi
            = 1 - ( 1 - \phi ) ( 1 - R_g \phi )
          \end{align*}
          holds by $\nu_1 \geq \phi R_g \phi$, one has $\nu_2 \leq 1$ by $\phi , R_g \phi \leq 1$.
          Thus, we obtain $\nu_2 \in \mathcal{B} (I_2')$ by
          \begin{align*}
            \| \nu_2 \| = \| \phi \| + \| R_g \phi \| - \| \nu_1 \| = I_2'.
          \end{align*}

    \item
          We fix $g' \in G$ and define $u ( \nu ) := \phi_1 * \nu (g')$ for an integrable function $\nu \colon G \to [0,1]$.
          Since $\nu_1 \leq \phi \leq \nu_2$ and $\nu_1 \leq R_g \phi \leq \nu_2$ by \ref{item:S-convex-proof-nu-exist} and \ref{item:S-convex-proof-nu2}, one has
          \begin{equation}
            u ( \nu_1 ) \leq u ( \phi ) \leq u ( \nu_2 ) , \quad
            u ( \nu_1 ) \leq u ( R_g \phi ) \leq u ( \nu_2 ). \label{eq:convolution-nu-inequality}
          \end{equation}
          If $u ( \nu_1 ) = u ( \nu_2 )$, then the equalities are attained in \eqref{eq:convolution-nu-inequality} and hence one has $\xi_\phi (g') + \xi_{R_g \phi} (g') = \xi_{\nu_1} (g') + \xi_{\nu_2} (g')$.
          Thus, it suffices to show the result when $u ( \nu_1 ) < u ( \nu_2 )$ holds.
          One has
          \begin{alignat*}{2}
            \xi_\phi (g')
             & \leq \tilde{f}_{u ( \nu_1 )}^{u ( \nu_2 )} \circ u ( \phi )
             &                                                                 & = \frac{u ( \nu_2 - \phi ) \xi_{\nu_1} (g') + u ( \phi - \nu_1 ) \xi_{\nu_2} (g')}{u (\nu_2 - \nu_1)},        \\
            \xi_{R_g \phi} (g')
             & \leq \tilde{f}_{u ( \nu_1 )}^{u ( \nu_2 )} \circ u ( R_g \phi )
             &                                                                 & = \frac{u ( \nu_2 - R_g \phi ) \xi_{\nu_1} (g') + u ( R_g \phi - \nu_1 ) \xi_{\nu_2} (g')}{u (\nu_2 - \nu_1)}
          \end{alignat*}
          by \eqref{eq:convolution-nu-inequality} and the convexity of $f$.
          Since $\nu_2 = \phi + R_g \phi - \nu_1$, one has
          \begin{align*}
            \xi_\phi (g') + \xi_{R_g \phi} (g')
             & \leq \frac{u ( 2 \nu_2 - \phi - R_g \phi ) \xi_{\nu_1} (g') + u (\phi + R_g \phi - 2 \nu_1 ) (g') \xi_{\nu_2} ( g' )}{u (\nu_2 - \nu_1)} \\
             & = \xi_{\nu_1} (g') + \xi_{\nu_2} (g').
          \end{align*}

    \item
          Since $\phi_1 * R_g \phi (g') = \phi_1 * \phi (g'g)$ holds for any $g' \in G$, one has $\xi_{R_g \phi} (g') = \xi_{\phi} (g'g)$.
          Thus, $\| \xi_{R_g \phi} \| = \| \xi_{\phi} \|$ holds and hence we obtain $2 \| \xi_\phi \| \leq \| \xi_{\nu_1} \| + \| \xi_{\nu_2} \|$ by \ref{item:S-convex-proof-xi-sum}.
          In particular, $2 S(I) \leq S(I_1') + S(I_2')$ holds by taking the upper bound on $\phi \in \mathcal{B} (I)$.
          \qedhere
  \end{enumerate}

\end{proof}

\begin{proof}[Proof of Lemma \ref{lem:S-property}]

  \begin{enumerate}
    \item
          For any $\phi \in \mathcal{B} (I)$, there is $\bar{\phi} \in \mathcal{B} (I')$ with $\phi \leq \bar{\phi}$.
          Thus, one has $S(I) \leq S(I')$ by $\xi_\phi \leq \xi_{\bar{\phi}}$.
          On the other hand, for any $\bar{\phi} \in \mathcal{B} (I')$, there is $\phi \in \mathcal{B} (I)$ with $\phi \leq \bar{\phi}$.
          Since $\xi_{\bar{\phi}} - \xi_{\phi} \leq \phi_1 * ( \bar{\phi} - \phi)$ holds, one has
          \begin{align*}
            \| \xi_{\bar{\phi}} \| - \| \xi_{\phi} \|
            \leq \| \phi_1 * ( \bar{\phi} - \phi) \|
            = I'- I
          \end{align*}
          by Lemma \ref{lem:Fubini}.
          Thus, $S(I') - S(I) \leq I' - I$ holds.
          In particular, $S$ is Lipschitz continuous.

    \item
          Let $0 \leq J_1 < J_2 \leq m (G)$.
          Since $[J_1,J_2]$ is compact, there is
          \begin{align*}
            M := \min_{J_1 \leq I \leq J_2} \hat{S}_{J_1}^{J_2} (I)
          \end{align*}
          by \ref{item:S-property-Lipschitz}.
          It suffices to show that $M=0$.
          The set
          \begin{align*}
            \mathcal{C} := \left\{ J_1 \leq I \leq J_2 \; \middle| \; \hat{S}_{J_1}^{J_2} (I) = M \right\}
          \end{align*}
          is closed by \ref{item:S-property-Lipschitz}.

          We will show that $\mathcal{C} \cap (J_1,J_2)$ is open.
          Suppose $I \in \mathcal{C}$,
          \begin{align}
            \max \{ J_1 , I^2/m(G) , 2I-J_2 \} < I_1' \leq I \label{eq:I1-I-close}
          \end{align}
          and $I_2' := 2I - I_1'$.
          It suffices to show that $\hat{S}_{J_1}^{J_2} (I_1') = \hat{S}_{J_1}^{J_2} (I_2') = M$.
          Since $J_1 < I_1' \leq I_2' < J_2$ holds by \eqref{eq:I1-I-close}, one has
          \begin{equation}
            2M \leq \hat{S}_{J_1}^{J_2} (I_1') + \hat{S}_{J_1}^{J_2} (I_2') = \tilde{S}_{J_1}^{J_2} (I_1') - S(I_1') + \tilde{S}_{J_1}^{J_2} (I_2') - S(I_2'). \label{eq:2M-leq-sum-hat}
          \end{equation}
          As
          \begin{align*}
            \tilde{S}_{J_1}^{J_2} (I_1') + \tilde{S}_{J_1}^{J_2} (I_2')
            = \frac{(2J_2 - I_1' - I_2') S(J_1) + (I_1' + I_2' - 2J_1) S(J_2)}{J_2-J_1}
            = 2 \tilde{S}_{J_1}^{J_2} (I)
          \end{align*}
          holds by $I_1' +I_2' = 2I$, one has
          \begin{equation}
            \tilde{S}_{J_1}^{J_2} (I_1') - S(I_1') + \tilde{S}_{J_1}^{J_2} (I_2') - S(I_2')
            \leq 2 ( \tilde{S}_{J_1}^{J_2} (I) - S (I) )
            = 2 \hat{S}_{J_1}^{J_2} (I) \label{eq:2M-geq-sum-hat}
          \end{equation}
          by Lemma \ref{lem:S-convex-proof} \ref{item:S-convex-proof-xi-norm}.
          Since $\hat{S}_{J_1}^{J_2} (I) = M$ holds by $I \in \mathcal{C}$, the equalities are attained in \eqref{eq:2M-leq-sum-hat} and \eqref{eq:2M-geq-sum-hat}.
          Thus, one has $\hat{S}_{J_1}^{J_2} (I_1') = \hat{S}_{J_1}^{J_2} (I_2') = M$ and hence $\mathcal{C} \cap (J_1,J_2)$ is open.

          Since $\mathcal{C} \cap (J_1,J_2)$ is clopen on the connected set $(J_1,J_2)$, one has either $\mathcal{C} \cap (J_1,J_2) = \emptyset$ or $(J_1,J_2) \subset \mathcal{C}$.
          In the case of $\mathcal{C} \cap (J_1,J_2) = \emptyset$, one has either $M = \hat{S}_{J_1}^{J_2} (J_1)$ or $M = \hat{S}_{J_1}^{J_2} (J_2)$, and hence we obtain $M = 0$ by
          \begin{equation}
            \hat{S}_{J_1}^{J_2} (J_1) = \hat{S}_{J_1}^{J_2} (J_2) = 0. \label{eq:hat-0}
          \end{equation}
          In the case of $(J_1,J_2) \subset \mathcal{C}$, one has $\mathcal{C} = [J_1,J_2]$ because $\mathcal{C}$ is closed.
          Thus, we obtain $M=0$ by \eqref{eq:hat-0}.

    \item
          Since $\xi_\phi = 0$ holds for any $\phi \in \mathcal{B} (t)$ by \eqref{eq:psi-leq-I2}, one has $S(t) = 0$.

    \item
          As $\phi_1 * \phi \leq 1$ holds by \eqref{eq:psi-leq-I1}, one has $\xi_\phi \leq \tilde{f}_0^1 \circ (\phi_1 * \phi) = (1-t) (\phi_1 * \phi)$ by the convexity of $f$.
          Since $\| \xi_\phi \| \leq (1-t) \| \phi_1 * \phi \| = (1-t) I$ holds by Lemma \ref{lem:Fubini}, we obtain $S(I) \leq (1-t) I$.
          \qedhere
  \end{enumerate}

\end{proof}

\section{Proof of Lemma \ref{lem:ft-convolution-L1-L-infinity} when \texorpdfstring{$m (G) = \infty$}{m(G)=∞}}
\label{sec:main-ft-m-infinite}

In this section, we show \eqref{eq:main-ft-S} when $m (G) = \infty$ holds.
It suffices to show
\begin{equation}
  S (I_2) \leq (1 - t)(I_2 - t) \label{eq:ft-convolution-express}
\end{equation}
for any $I_2 \geq t$ by \eqref{eq:main-ft-S}.
If $I \geq I_2$ holds, then
\begin{align*}
  S(I_2)
  \leq \tilde{S}_{t}^{I} (I_2)
  = \frac{(I - I_2) S(t) + (I_2 - t)S(I)}{I - t}
\end{align*}
holds by Lemma \ref{lem:S-property} \ref{item:S-property-convex}.
In addition, one has
\begin{align*}
  \frac{(I - I_2) S(t) + (I_2 - t)S(I)}{I - t} \leq \frac{(1 - t)(I_2 - t) I}{I - t}
\end{align*}
by Lemma \ref{lem:S-property} \ref{item:S-property-t} and \ref{item:S-property-major}.
Thus,
\begin{align*}
  S (I_2) \leq \lim_{I \to \infty} \frac{(1 - t)(I_2 - t) I}{I - t}
  = (1 - t)(I_2 - t)
\end{align*}
holds because $I_2 \leq I < \infty ( = m (G))$ is arbitrary.
Therefore, when $m (G) = \infty$ holds, we obtain \eqref{eq:ft-convolution-express} and hence Lemma \ref{lem:ft-convolution-L1-L-infinity}.

\section{Proof of Lemma \ref{lem:ft-convolution-L1-L-infinity} when \texorpdfstring{$m (G) < \infty$}{m(G)<∞}}
\label{sec:main-ft-m-finite}

In this section, we show \eqref{eq:main-ft-S} when $m (G) < \infty$ holds.
We simply write $f$ for $f_t$ in \eqref{eq:ft-define}.
It suffices to show \eqref{eq:main-ft-S} when $\| \phi_1 \| = 1$ and $\| \phi_2 \| > 0$ hold by Remark \ref{rem:convex-convolution} \ref{item:convex-convolution-normalize}.
In this case, $G_0 \subset G$ is an open subgroup because
\begin{align*}
  0 \leq 1 - t \leq m (G) - \| \phi_2 \| < m (G) = \mathrm{vol} (G_0)
\end{align*}
holds by Remark \ref{rem:convex-convolution} \ref{item:convex-convolution-connected-component} and \eqref{eq:m-large-weak} (we note that $G'$ is open if and only if $\mathrm{vol}(G') > 0$, for any closed subgroup $G' \subset G$).
Thus, we can take the Haar measure of $G_0$ to be the same as of $G$.
That is,
\begin{align}
  \| \phi \| = \sum_{g G_0 \in G / G_0} \| \phi \|_{g G_0} \label{eq:integral-decompose}
\end{align}
holds for any integrable function $\phi \colon G \to \mathbb{R}_{\geq 0}$.

We give a lemma (Lemma \ref{lem:rearrange-large}) to prove Lemma \ref{lem:ft-convolution-L1-L-infinity}.
For a measurable function $\phi \colon G \to \mathbb{R}_{\geq 0}$, we define a function $\phi^* \colon \mathbb{R} \times G/G_0 \to \{ 0,1 \}$.

\begin{definition}
  \label{def:rearrange}

  For a measurable function $\phi \colon G \to \mathbb{R}_{\geq 0}$, the function $\phi^* \colon \mathbb{R} \times G/G_0 \to \{ 0,1 \}$ is defined as
  \begin{align*}
    \phi^* (x, g G_0) := \phi_{(P(g))} (x), \quad
    P(g) := \| \phi \|_{g G_0} = \int_{g G_0}^{} \phi (g') dg',
  \end{align*}
  where $\phi_{(I)}$ is the same as in Example \ref{ex:main} \ref{item:main-psi0}.

\end{definition}

Lemma \ref{lem:ft-convolution-L1-L-infinity} is deduced by the following lemma.

\begin{lemma}
  \label{lem:rearrange-large}

  If integrable functions $\phi_1 , \phi_2 \colon G \to [0,1]$ satisfy \eqref{eq:m-large-weak}, then
  \begin{align*}
    \| f \circ ( \phi_1 * \phi_2 ) \|_{g G_0} \leq \| f \circ (\phi_1^* * \phi_2^*) \|_{\mathbb{R} \times \{ g G_0 \} }
  \end{align*}
  holds for any $g \in G$.
  In particular, one has $\| f \circ ( \phi_1 * \phi_2 ) \| \leq \| f \circ (\phi_1^* * \phi_2^*) \|$.

\end{lemma}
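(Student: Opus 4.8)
The plan is to first establish the stated coset-wise inequality for a single target coset $gG_0$ and then sum over $G/G_0$. Since $G_0$ is open and compact (Remark \ref{rem:convex-convolution} \ref{item:convex-convolution-connected-component}), the decomposition \eqref{eq:integral-decompose} applies on the $G$-side, and the analogous decomposition over the discrete group $G/G_0$ applies to $\phi_1^* * \phi_2^*$ on $\mathbb{R} \times G/G_0$; granting the coset-wise bound, the ``In particular'' assertion follows by summing these. So fix $gG_0$. Writing $g'$ for the source-coset variable, by \eqref{eq:integral-decompose} the restriction of $\phi_1 * \phi_2$ to $gG_0$ is a sum over the source cosets $g'G_0 \in G/G_0$ of convolutions on the compact group $G_0$ of the restrictions $\phi_1|_{g'G_0}$ and $\phi_2|_{g'^{-1}gG_0}$; correspondingly, by Definition \ref{def:rearrange}, the restriction of $\phi_1^* * \phi_2^*$ to $\mathbb{R} \times \{gG_0\}$ is the sum over the same index set of the trapezoidal functions $\psi_{\alpha,\gamma}$ of Example \ref{ex:main} \ref{item:main-psi0}, where $\alpha := \|\phi_1\|_{g'G_0}$ and $\gamma := \|\phi_2\|_{g'^{-1}gG_0}$. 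I would record the invariants shared by each matched pair of terms: both have $L^1$-mass $\alpha\gamma$ (Lemma \ref{lem:Fubini}) and both are bounded pointwise by $\min(\alpha,\gamma)$ (as $\phi_1,\phi_2 \leq 1$), so that on the coset the two full functions are bounded by $\|\phi_1\|$ (cf.\ \eqref{eq:psi-leq-I1}) and have equal total mass.

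Writing $W := (\phi_1 * \phi_2)|_{gG_0}$ and $\Psi := (\phi_1^* * \phi_2^*)|_{\mathbb{R} \times \{gG_0\}}$, both nonnegative, bounded by $\|\phi_1\|$, with equal integral, the goal $\|f \circ W\| \leq \|f \circ \Psi\|$ is, by the layer-cake identity $\int (h - t)_+ = \int_t^\infty \mathrm{vol}(\{h > s\})\,ds$ together with the equality of masses, equivalent to the single integrated-tail inequality $\int_{\mathbb{R}} (\Psi - t)_+ \geq \int_{gG_0} (W - t)_+$. Two facts would drive the comparison. First, each trapezoid $\psi_{\alpha,\gamma}$ is symmetric and non-increasing in $|x|$, hence so is $\Psi$; this is the maximally concentrated profile, and the Hardy--Littlewood--P\'olya principle (replacing each summand on $G_0$ by its symmetric decreasing rearrangement along the $\mathbb{R}$-fiber only increases $\int f$ of the sum, since co-monotone rearrangements majorize) reduces the problem to comparing two symmetric decreasing functions on $\mathbb{R}$. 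Second, applying Lemma \ref{lem:m-small-counterexample} \ref{item:m-small-counterexample-psi-large} to the connected group $G_0$ gives the Kemperman-type pointwise lower bound $\phi_1|_{g'G_0} * \phi_2|_{g'^{-1}gG_0} \geq \alpha + \gamma - m(G)$ on all of $G_0$, which controls the low-value part of each $G_0$-term, precisely where its value distribution differs from that of the trapezoid.

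The decisive use of \eqref{eq:m-large-weak} is the following. Section \ref{sec:assumption-necessary} already shows that a single $G_0$-convolution term is dominated by its trapezoid only above the floor $\alpha + \gamma - m(G)$, and can strictly exceed it below that floor, so per-term majorization by the trapezoid fails in general. However \eqref{eq:m-large-weak} gives $t \geq \|\phi_1\| + \|\phi_2\| - m(G) \geq \alpha + \gamma - m(G)$ for every source coset simultaneously, so the fixed level $t$ lies above the Kemperman floor of every term at once; this is exactly the regime in which the trapezoidal (interval) configuration dominates. Combined with the co-monotone rearrangement step, this should deliver $\int_{\mathbb{R}} (\Psi - t)_+ \geq \int_{gG_0} (W - t)_+$, hence the coset-wise inequality, and summing over $G/G_0$ finishes the proof.

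The main obstacle I anticipate is the interaction between the sum over source cosets and the convex function $f = f_t$: because $f_t$ of a sum does not split as a sum of $f_t$'s, and because the per-term domination by the trapezoid holds only above the Kemperman floor, the argument cannot be carried out one source coset at a time, which would in any case be circular (it is the connected case of \eqref{eq:ft-convolution} itself). The crux is therefore to convert the global $L^1$ constraint \eqref{eq:m-large-weak} into the pointwise statement that the fixed level $t$ sits above every term's floor, and to package the co-monotone rearrangement so that it respects this single level rather than demanding full majorization. Some care is also needed when $G/G_0$ is infinite, to justify the coset sums and the rearrangement by dominated convergence, using $\sum_{g'} \alpha = \|\phi_1\| < \infty$ and $\sum_{g'} \gamma = \|\phi_2\| < \infty$.
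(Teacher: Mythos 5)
Your setup---reduction to a single target coset, the decomposition of $(\phi_1 * \phi_2)|_{gG_0}$ into $G_0$-convolutions indexed by source cosets, and the identification of $(\phi_1^* * \phi_2^*)|_{\mathbb{R} \times \{gG_0\}}$ as the corresponding sum of trapezoids---matches the paper's (Lemma \ref{lem:rearrange-action}, Lemma \ref{lem:psi-decomposition}, Example \ref{ex:rearrange-convolution}), and you correctly isolate the crux: $f_t$ does not split over the sum, and per-term domination by the trapezoid fails below the Kemperman floor. But your proposed resolution does not close this gap. The rearrangement step you invoke replaces each summand $\omega_{g''} := (L_{g''}\phi_1) *_{G_0} (R_{g''}\phi_2)$ by an equimeasurable symmetric decreasing function, which leaves every quantity $\| f_s \circ \omega_{g''} \|$ unchanged; it therefore cannot turn $\omega_{g''}$ into the trapezoid $\psi_{\alpha,\gamma}$, whose distribution is genuinely different (by Section \ref{sec:assumption-necessary}, $\| f_s \circ \omega_{g''} \|$ can strictly exceed $\| f_s \circ \psi_{\alpha,\gamma} \|$ for small $s$). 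After rearranging you are thus left with exactly the comparison you started from. Moreover, the observation that $t \geq \alpha + \gamma - m(G)$ for every source coset is not the condition that is actually needed: since $f_t$ is applied to the sum, the level $t$ has to be distributed among the cosets, and what must sit above each Kemperman floor is the per-coset share of $t$, not $t$ itself.

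The paper's missing ingredient is precisely this distribution of the level. It chooses $x_0 \geq 0$ with $\phi_1^* * \phi_2^* (x_0, G_0) = t$ (intermediate value theorem; the case $t \geq t_0 := \phi_1^* * \phi_2^* (0, G_0)$ is treated separately, both sides then vanishing) and sets $t_{g''} := \psi_{P_1(g''^{-1}), P_2(g'')} (x_0)$, so that $\sum_{g''} t_{g''} = t$. Then $\| f_t \circ (\phi_1 * \phi_2) \|_{G_0}$ is rewritten as $\sum_{g''} \int_{\{ \phi_1 * \phi_2 \geq t \}} ( \omega_{g''} - t_{g''} )$, each term is bounded by $\| f_{t_{g''}} \circ \omega_{g''} \|_{G_0}$, and the connected case of Subsection \ref{subsec:main-ft-m-finite-connected} is applied at the level $t_{g''}$ rather than $t$; the use of a single common $x_0$ is what makes the resulting per-coset bounds $\| \psi_{P_1(g''^{-1}), P_2(g'')} - t_{g''} \|_{(-x_0,x_0)}$ resum exactly to $\| f_t \circ (\phi_1^* * \phi_2^*) \|_{\mathbb{R} \times \{ G_0 \}}$. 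Without this level-splitting device (or an equivalent substitute), your plan to ``package the co-monotone rearrangement so that it respects this single level'' is a restatement of the difficulty rather than a proof of the lemma.
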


The estimate $\| f \circ ( \phi_1 * \phi_2 ) \| \leq \| f \circ (\phi_1^* * \phi_2^*) \|$ follows from
\begin{align*}
  \| f \circ ( \phi_1 * \phi_2 ) \|
   & = \sum_{g G_0 \in G / G_0} \| f \circ ( \phi_1 * \phi_2 ) \|_{g G_0}                               \\
   & \leq \sum_{g G_0 \in G / G_0} \| f \circ (\phi_1^* * \phi_2^*) \|_{\mathbb{R} \times \{ g G_0 \} } \\
   & = \| f \circ (\phi_1^* * \phi_2^*) \|
\end{align*}
by \eqref{eq:integral-decompose}.

\begin{example}
  \label{ex:rearrange-connected}

  If $G = G_0$, then one has $\phi_1^* = \phi_{(1)}$ and $\phi_2^* = \phi_{( \| \phi_2 \| )}$ by identifying $\mathbb{R} \times G/G_0$ with $\mathbb{R}$.
  Thus, $\| f \circ (\phi_1^* * \phi_2^*) \| = \| f \circ \psi_{1, \| \phi_2 \| } \|$ holds and hence Lemma \ref{lem:rearrange-large} is equivalent to Lemma \ref{lem:ft-convolution-L1-L-infinity} and \eqref{eq:main-ft-S} by \eqref{eq:ft-convolution-psi0}.

\end{example}

Since $m ( \mathbb{R} \times G/G_0 ) = \infty$, $\| \phi_1 \| = \| \phi_1^* \| = 1$ and $\| \phi_2 \| = \| \phi_2^* \|$, one has
\begin{align*}
  \| f \circ ( \phi_1 * \phi_2 ) \|
  \leq \| f \circ (\phi_1^* * \phi_2^*) \|
  \leq (1 - t) (\| \phi_2 \| - t)
\end{align*}
by the argument in Section \ref{sec:main-ft-m-infinite}.
Thus, we obtain Lemma \ref{lem:ft-convolution-L1-L-infinity} from Lemma \ref{lem:rearrange-large}.

Lemma \ref{lem:rearrange-large} is shown in two steps.
We will show Lemma \ref{lem:rearrange-large} in the case of $G = G_0$ (that is, $G$ is connected) in Subsection \ref{subsec:main-ft-m-finite-connected} at first, and for arbitrary $G$ in Subsection \ref{subsec:main-ft-m-finite-general} by using the case of $G = G_0$.

\subsection{Proof of Lemma \ref{lem:rearrange-large} when \texorpdfstring{$G = G_0$}{G=G0}}
\label{subsec:main-ft-m-finite-connected}

In this subsection, we show Lemma \ref{lem:rearrange-large} when $G = G_0$ holds.
It suffices to show \eqref{eq:main-ft-S} by Example \ref{ex:rearrange-connected}.
We have
\begin{align*}
  S (I)
  \leq \tilde{S}_{t}^{I'} (I)
  = \frac{(I' - I) S(t) + (I - t) S(I')}{I' - t}
\end{align*}
by Lemma \ref{lem:S-property} \ref{item:S-property-convex} for $I' := m (G) + t - 1$.
Since $G = G_0$ and $t = 1 + I' - m(G)$, it follows that
\begin{align*}
  S (I')
  = I' - t m (G)
  = ( 1 - t ) ( I' - t )
\end{align*}
by Lemma \ref{lem:m-small-counterexample} \ref{item:m-small-counterexample-ft-norm}.
Thus,
\begin{align*}
  S (I)
  \leq \frac{(I' - I) S(t) + (I - t) S(I')}{I' - t}
  = (1-t) (I-t)
\end{align*}
holds by Lemma \ref{lem:S-property} \ref{item:S-property-t}.
Therefore, we obtain \eqref{eq:main-ft-S} and Lemma \ref{lem:rearrange-large} when $G =G_0$ holds.

\subsection{Proof of Lemma \ref{lem:rearrange-large} for any \texorpdfstring{$G$}{G}}
\label{subsec:main-ft-m-finite-general}

In this subsection, we show Lemma \ref{lem:rearrange-large} for any $G$.
It suffices to show it when $g = e$ by the following lemma and remark.

\begin{lemma}
  \label{lem:rearrange-action}

  One has $(R_g \phi )^* = R_{(0, gG_0)} ( \phi^* )$ for any integrable function $\phi \colon G \to \mathbb{R}_{\geq 0}$ and $g \in G$, where $R_g \phi$ is the same as in Lemma \ref{lem:S-convex-proof}.

\end{lemma}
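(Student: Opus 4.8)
The plan is to unfold both sides of the asserted identity at an arbitrary point $(x, g'G_0) \in \mathbb{R} \times G/G_0$ and to reduce the whole statement to a single scalar identity about masses of $\phi$ on cosets. By Definition \ref{def:rearrange}, the left-hand side is $(R_g\phi)^*(x, g'G_0) = \phi_{(\|R_g\phi\|_{g'G_0})}(x)$. For the right-hand side, I would apply the right-translation convention $R_h\Phi(\cdot) := \Phi(\cdot\, h)$ of Lemma \ref{lem:S-convex-proof}, now to the product group $\mathbb{R} \times G/G_0$: since $(x, g'G_0)(0, gG_0) = (x, g'gG_0)$, this gives $R_{(0,gG_0)}(\phi^*)(x, g'G_0) = \phi^*(x, g'gG_0) = \phi_{(\|\phi\|_{g'gG_0})}(x)$. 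Comparing the two expressions, the claim reduces to proving the scalar identity $\|R_g\phi\|_{g'G_0} = \|\phi\|_{g'gG_0}$ for every $g' \in G$.

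The key step is a change of variables in $\|R_g\phi\|_{g'G_0} = \int_{g'G_0} \phi(g''g)\,dg''$. Because $G$ is unimodular, the Haar measure is invariant under the right translation $g'' \mapsto g''g$, so this integral equals $\int_{(g'G_0)g} \phi(u)\,du$. Since $G_0$ is the identity component of $G$, it is a closed normal subgroup, whence $G_0 g = g G_0$ and therefore $(g'G_0)g = g'(G_0 g) = g'(g G_0) = g'gG_0$. Substituting this coset back into the integral yields $\|R_g\phi\|_{g'G_0} = \int_{g'gG_0}\phi(u)\,du = \|\phi\|_{g'gG_0}$, which is exactly the scalar identity required, and hence $(R_g\phi)^*(x, g'G_0) = \phi_{(\|\phi\|_{g'gG_0})}(x) = R_{(0,gG_0)}(\phi^*)(x, g'G_0)$ for all $(x, g'G_0)$.

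I do not expect a serious obstacle in this lemma; it is essentially a bookkeeping identity. The only two points needing care are both already available to us: invoking unimodularity to justify the right-invariance of $dg''$ under $g'' \mapsto g''g$, and invoking the normality of $G_0$ to rewrite the coset $(g'G_0)g$ as $g'gG_0$. Once these are in place, the equality of the two functions follows by matching the parameters of the interval indicators $\phi_{(\cdot)}$ coordinate by coordinate.
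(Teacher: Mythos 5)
Your proof is correct and follows essentially the same route as the paper: unfold both sides at $(x, g'G_0)$ and reduce to the identity $\| R_g \phi \|_{g' G_0} = \| \phi \|_{g'g G_0}$, which the paper states as $\| R_g \phi \|_{g' G_0} = P(g'g)$ without further comment. Your explicit justification via right-invariance of the Haar measure and normality of $G_0$ simply spells out what the paper leaves implicit.
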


\begin{proof}

  Let $P(g)$ be the same as in Definition \ref{def:rearrange}.
  One has
  \begin{align*}
    ( R_g \phi )^* (x, g' G_0)
    = \phi_{(\| R_g \phi \|_{g' G_0})} (x)
    = \phi_{(P(g'g))} (x)
    = \phi^* (x, g'g G_0 )
    = R_{(0,g G_0)} ( \phi^* ) (x,g' G_0)
  \end{align*}
  for any $x \in \mathbb{R}$ and $g' \in G$.
\end{proof}

\begin{remark}
  \label{rem:rearrange-identity-suffice}

  Under the hypothesis of Lemma \ref{lem:rearrange-large},
  \begin{align*}
    \| f \circ ( \phi_1 * \phi_2 ) \|_{g G_0}
    = \| f \circ R_g ( \phi_1 * \phi_2 ) \|_{G_0}
    = \| f \circ ( \phi_1 * R_g \phi_2 ) \|_{G_0}
  \end{align*}
  and similarly
  \begin{align*}
    \| f \circ ( \phi_1^* * \phi_2^* ) \|_{ \mathbb{R} \times \{ g G_0 \} } = \| f \circ ( \phi_1^* * R_{(0, g G_0)} ( \phi_2^*) ) \|_{\mathbb{R} \times \{ G_0 \} }
  \end{align*}
  hold.
  Thus, one has
  \begin{align*}
    \| f \circ ( \phi_1^* * \phi_2^* ) \|_{ \mathbb{R} \times \{ g G_0 \} }
    = \| f \circ ( \phi_1^* * ( R_g \phi_2)^* ) \|_{\mathbb{R} \times \{ G_0 \} }
  \end{align*}
  by Lemma \ref{lem:rearrange-action}, and hence it suffices to show Lemma \ref{lem:rearrange-large} in the case of $g=e$ by replacing $\phi_2$ with $R_g \phi_2$.

\end{remark}

We show Lemma \ref{lem:rearrange-large} when $g=e$ holds.
We define
\begin{align*}
  \phi_1 *_{G_0} \phi_2 (g') := \int_{G_0}^{} \phi_1 (g'') \phi_2 (g''^{-1} g') dg''
\end{align*}
for $g' \in G_0$.
We also define integrable functions $L_{g''} \phi, R_{g''} \phi \colon G \to \mathbb{R}_{\geq 0}$ as
\begin{align*}
  L_{g''} \phi (g') := \phi (g''^{-1} g'), \quad
  R_{g''} \phi (g') := \phi (g' g'')
\end{align*}
for $g'' \in G$ and an integrable function $\phi \colon G \to \mathbb{R}_{\geq 0}$.
We give the following lemma to prove Lemma \ref{lem:rearrange-large}.

\begin{lemma}
  \label{lem:psi-decomposition}

  For any $g' \in G_0$, one has
  \begin{align*}
    \phi_1 * \phi_2 (g') = \sum_{g'' G_0 \in G / G_0}  (L_{g''} \phi_1) *_{G_0} (R_{g''} \phi_2) (g'' g' g''^{-1} ).
  \end{align*}

\end{lemma}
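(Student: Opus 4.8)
The plan is to unfold both sides as integrals over $G$ and match them coset by coset. Since we are in the case $m(G) < \infty$, the identity component $G_0$ is an open normal subgroup with $\mathrm{vol}(G_0) = m(G)$, and the Haar measure of $G_0$ is the restriction of that of $G$ (as fixed at the start of this section). Hence the Haar integral over $G$ splits as $\int_G = \sum_{g'' G_0 \in G/G_0} \int_{g'' G_0}$ over the left cosets. Writing the left-hand side as $\phi_1 * \phi_2(g') = \int_G \phi_1(u) \phi_2(u^{-1} g') \, du$, the goal is to show that the right-hand side is precisely a reindexed version of this coset decomposition.

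First I would compute a single summand on the right. Unfolding the definition of $*_{G_0}$ together with the translations $L_{g''}$ and $R_{g''}$, and using that $g'' g' g''^{-1} \in G_0$ (which holds because $G_0$ is normal and $g' \in G_0$, so the evaluation point lies in $G_0$ and the $G_0$-convolution is defined there), one obtains $(L_{g''}\phi_1) *_{G_0} (R_{g''}\phi_2)(g'' g' g''^{-1}) = \int_{G_0} \phi_1(g''^{-1} k) \phi_2(k^{-1} g'' g') \, dk$, after the cancellation $k^{-1}(g'' g' g''^{-1}) g'' = k^{-1} g'' g'$. The substitution $u = g''^{-1} k$ (that is, $k = g'' u$), which is measure-preserving by left-invariance and carries $G_0$ onto the coset $g''^{-1} G_0$, then collapses all the $g''$'s simultaneously in both factors: the integrand becomes $\phi_1(u) \phi_2(u^{-1} g')$, so the summand equals $\int_{g''^{-1} G_0} \phi_1(u) \phi_2(u^{-1} g') \, du$.

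Finally I would sum over $g'' G_0 \in G/G_0$. Because $G_0$ is normal, $G/G_0$ is a group and the assignment $g'' G_0 \mapsto g''^{-1} G_0$ is the inversion map, hence a bijection of $G/G_0$; consequently $\{ g''^{-1} G_0 \}$ ranges over all left cosets exactly once. Therefore the sum of the summands reassembles $\sum_{C \in G/G_0} \int_C \phi_1(u) \phi_2(u^{-1} g') \, du = \int_G \phi_1(u) \phi_2(u^{-1} g') \, du = \phi_1 * \phi_2(g')$, which is the asserted identity.

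The steps are bookkeeping rather than substance: the one-line cancellation $k^{-1} g'' g' g''^{-1} g'' = k^{-1} g'' g'$ and the effect of the substitution on the two factors at once are routine. The point I expect to require the most care is the final reindexing, namely justifying that traversing $g'' G_0$ over $G/G_0$ and evaluating the summand over $g''^{-1} G_0$ covers each coset exactly once; normality of $G_0$, which makes inversion descend to a bijection of the quotient $G/G_0$, is exactly what makes this legitimate.
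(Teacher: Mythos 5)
Your proposal is correct and is essentially the paper's own argument read in the opposite direction: the paper starts from $\phi_1 * \phi_2(g') = \int_G \phi_1(g''')\phi_2(g'''^{-1}g')\,dg'''$, splits the integral over the cosets $g''^{-1}G_0$ via the substitution $g''' = g''^{-1}g''''$, and recognizes each piece as $(L_{g''}\phi_1)*_{G_0}(R_{g''}\phi_2)(g''g'g''^{-1})$, which is exactly your change of variables $u = g''^{-1}k$ and coset-inversion reindexing run in reverse. No gaps; the points you flag (normality making $g''G_0 \mapsto g''^{-1}G_0$ a bijection of $G/G_0$, and the cancellation inside $\phi_2$) are the same ones the paper relies on implicitly.
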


\begin{proof}

  Since $g'' G_0 g''^{-1} = G_0$ holds for any $g'' \in G$,
  \begin{align*}
    \phi_1 * \phi_2 (g')
     & = \int_{G}^{} \phi_1 (g''') \phi_2 (g'''^{-1} g') dg'''                                                              \\
     & = \sum_{g'' G_0 \in G / G_0} \int_{G_0}^{} \phi_1 (g''^{-1} g'''') \phi_2 (g''''^{-1} g'' g') dg''''                 \\
     & = \sum_{g'' G_0 \in G / G_0} \int_{G_0}^{} L_{g''} \phi_1 (g'''') R_{g''} \phi_2 (g''''^{-1} g'' g' g''^{-1}) dg'''' \\
     & = \sum_{g'' G_0 \in G / G_0}  (L_{g''} \phi_1) *_{G_0} (R_{g''} \phi_2) (g'' g' g''^{-1} )
  \end{align*}
  is obtained by \eqref{eq:integral-decompose}.
\end{proof}

\begin{example}
  \label{ex:rearrange-convolution}

  Let $P_1 (g'') := \| \phi_1 \|_{g'' G_0}$ and $P_2 (g'') := \| \phi_2 \|_{g'' G_0}$ for $g'' \in G$.
  The identity component of $\mathbb{R} \times G/G_0$ is $\mathbb{R} \times \{ G_0 \}$, and hence
  \begin{align*}
    \phi_1^* * \phi_2^* (x, G_0)
    = \sum_{g'' G_0 \in G / G_0} \phi_{( P_1 (g''^{-1}))} * \phi_{( P_2 (g''))} (x)
    = \sum_{g'' G_0 \in G / G_0} \psi_{P_1 (g''^{-1}), P_2 (g'')} (x)
  \end{align*}
  for any $x \in \mathbb{R}$ by Lemma \ref{lem:psi-decomposition}, where $\psi_{I_1,I_2}$ is the same as in Example \ref{ex:main} \ref{item:main-psi0}.
  Since $\psi_{P_1 (g''^{-1}), P_2 (g'')} (x)$ is an even and monotonically decreasing function on $x \geq 0$, the convolution $\phi_1^* * \phi_2^* (x, G_0)$ is also an even function with respect to $x \in \mathbb{R}$ that is monotonically decreasing on $x \geq 0$.

\end{example}

We show Lemma \ref{lem:rearrange-large} by Example \ref{ex:rearrange-connected}, Remark \ref{rem:rearrange-identity-suffice}, Lemma \ref{lem:psi-decomposition} and Example \ref{ex:rearrange-convolution}.
It suffices to show it when $g = e$ holds by Remark \ref{rem:rearrange-identity-suffice}.
We set $t_0 := \phi_1^* * \phi_2^* (0 , G_0 )$ and divide the proof into the following two cases.

\begin{enumerate}
  \item \label{item:t-small}
        The case of $t \leq t_0$.
        Since $\phi_1^* * \phi_2^* (1 + \| \phi_2 \|, G_0) = 0 \leq t \leq t_0$ holds and $\phi_1^* * \phi_2^*$ is continuous, there is $x_0 \geq 0$ with $\phi_1^* * \phi_2^* (x_0 , G_0) = t$ by the intermediate value theorem.
        We set $t_{g''} := \psi_{P_1(g''^{-1}), P_2 (g'')} (x_0)$ for $g'' \in G$.
        Then
        \begin{align*}
          t = \sum_{g'' G_0 \in G / G_0} t_{g''}
        \end{align*}
        holds by Example \ref{ex:rearrange-convolution}, and hence one has
        \begin{align*}
          \phi_1 * \phi_2 (g') - t = \sum_{g'' G_0 \in G / G_0} ( \omega_{g''} (g'' g' g''^{-1}) - t_{g''} )
        \end{align*}
        for any $g' \in G_0$ by Lemma \ref{lem:psi-decomposition}, where we write $\omega_{g''} := (L_{g''} \phi_1) *_{G_0} (R_{g''} \phi_2)$.
        Thus, one has
        \begin{align}
          \| f \circ ( \phi_1 * \phi_2 ) \|_{G_0}
          = \sum_{g'' G_0 \in G / G_0} \int_{\{ g' \in G_0 \mid \phi_1 * \phi_2 (g') \geq t \}}^{} ( \omega_{g''} (g'' g' g''^{-1}) - t_{g''} ) dg'. \label{eq:convolution-omega-represent}
        \end{align}

        \begin{lemma}
          \label{lem:omega-psi-larger}

          For any $g'' \in G$, one has
          \begin{align*}
            \int_{\{ g' \in G_0 \mid \phi_1 * \phi_2 (g') \geq t \}}^{} ( \omega_{g''} (g'' g' g''^{-1}) - t_{g''} ) dg'
            \leq \| \psi_{P_1 (g''^{-1}) , P_2 (g'')} - t_{g''} \|_{( -x_0, x_0 )}.
          \end{align*}

        \end{lemma}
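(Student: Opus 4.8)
The plan is to reduce both sides of the asserted inequality to truncated $L^1$-norms, one over $G_0$ and one over $\mathbb{R}$, and then to recognize the resulting estimate as the already-proved case $G = G_0$ of Lemma~\ref{lem:rearrange-large} applied on the compact connected group $G_0$. Throughout I write $f_{t_{g''}}$ for the function \eqref{eq:ft-define} with parameter $t_{g''}$.

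For the left-hand side I would use $f_{t_{g''}}(y) \ge y - t_{g''}$ together with $f_{t_{g''}} \ge 0$ to write
\begin{align*}
  \int_{\{ g' \in G_0 \mid \phi_1 * \phi_2 (g') \geq t \}} ( \omega_{g''} (g'' g' g''^{-1}) - t_{g''} ) \, dg'
  \le \int_{G_0} f_{t_{g''}} \circ \omega_{g''} (g'' g' g''^{-1}) \, dg'.
\end{align*}
Since conjugation $g' \mapsto g'' g' g''^{-1}$ is a measure-preserving automorphism of the compact group $G_0$, the right-hand side equals $\| f_{t_{g''}} \circ \omega_{g''} \|_{G_0}$. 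For the right-hand side of the lemma I would use that $\psi_{P_1 (g''^{-1}), P_2 (g'')}$ is even and decreasing on $[0, \infty)$ (Example~\ref{ex:rearrange-convolution}) with value $t_{g''}$ at $x_0$; thus $\psi_{P_1 (g''^{-1}), P_2 (g'')} - t_{g''}$ is nonnegative on $(-x_0, x_0)$ and nonpositive outside, so that
\begin{align*}
  \| \psi_{P_1 (g''^{-1}), P_2 (g'')} - t_{g''} \|_{(-x_0, x_0)} = \| f_{t_{g''}} \circ \psi_{P_1 (g''^{-1}), P_2 (g'')} \|_{\mathbb{R}}.
\end{align*}
Hence it suffices to prove $\| f_{t_{g''}} \circ \omega_{g''} \|_{G_0} \le \| f_{t_{g''}} \circ \psi_{P_1 (g''^{-1}), P_2 (g'')} \|_{\mathbb{R}}$.

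To identify this with the connected case, note that $\omega_{g''} = (L_{g''} \phi_1) *_{G_0} (R_{g''} \phi_2)$, that $\| L_{g''} \phi_1 \|_{G_0} = P_1 (g''^{-1})$ and $\| R_{g''} \phi_2 \|_{G_0} = P_2 (g'')$ by the normality of $G_0$, and that $(L_{g''} \phi_1)^* * (R_{g''} \phi_2)^* = \psi_{P_1 (g''^{-1}), P_2 (g'')}$ on the connected group $G_0$ (Example~\ref{ex:rearrange-connected}). So the displayed inequality is exactly \eqref{eq:ft-convolution-psi0} on $G_0$ applied to $L_{g''} \phi_1$, $R_{g''} \phi_2$ and threshold $t_{g''}$, which is available by Subsection~\ref{subsec:main-ft-m-finite-connected}. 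Two hypotheses must be checked: first, $0 \le t_{g''} \le \min ( P_1 (g''^{-1}), P_2 (g'') )$, which holds because $t_{g''}$ is a value of the trapezoid $\psi_{P_1 (g''^{-1}), P_2 (g'')}$ whose maximum is $\min ( P_1 (g''^{-1}), P_2 (g'') )$ (Example~\ref{ex:main}~\ref{item:main-psi0}); and second, the analogue of \eqref{eq:m-large-weak} on $G_0$, namely $P_1 (g''^{-1}) + P_2 (g'') \le \mathrm{vol} (G_0) + t_{g''}$.

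The main obstacle is this last, per-coset, inequality, and I would obtain it by summing the single global hypothesis \eqref{eq:m-large-weak} over the cosets. Reindexing $\| \phi_1 \| = \sum_{h G_0} P_1 (h^{-1})$ and using $\| \phi_2 \| = \sum_{h G_0} P_2 (h)$ together with $t = \sum_{h G_0} t_h$ (Example~\ref{ex:rearrange-convolution}), the condition \eqref{eq:m-large-weak} reads
\begin{align*}
  \sum_{h G_0 \in G / G_0} ( P_1 (h^{-1}) + P_2 (h) - t_h ) \le m (G) = \mathrm{vol} (G_0).
\end{align*}
Every summand is nonnegative, since $t_h \le \min ( P_1 (h^{-1}), P_2 (h) )$ forces $P_1 (h^{-1}) + P_2 (h) - t_h \ge \max ( P_1 (h^{-1}), P_2 (h) ) \ge 0$. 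Consequently the single term indexed by $g''$ is itself at most $\mathrm{vol} (G_0)$, which is precisely the required inequality $P_1 (g''^{-1}) + P_2 (g'') \le \mathrm{vol} (G_0) + t_{g''}$. The crux is thus the observation that the coset-wise deficiencies $P_1 (h^{-1}) + P_2 (h) - t_h$ are nonnegative, so that dominating their sum by $\mathrm{vol} (G_0)$ dominates each term separately and makes the connected case applicable coset by coset.
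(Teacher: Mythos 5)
Your proof is correct and follows essentially the same route as the paper's: bound the left-hand side by $\| f_{t_{g''}} \circ \omega_{g''} \|_{G_0}$ using $y - t_{g''} \leq f_{t_{g''}} (y)$ together with the measure-preserving conjugation $g' \mapsto g'' g' g''^{-1}$ of $G_0$, rewrite the right-hand side as $\| f_{t_{g''}} \circ \psi_{P_1 (g''^{-1}) , P_2 (g'')} \|_{\mathbb{R}}$ via the monotonicity of the trapezoid and $\psi_{P_1 (g''^{-1}) , P_2 (g'')} (x_0) = t_{g''}$, and then invoke the connected case from Subsection \ref{subsec:main-ft-m-finite-connected} on $G_0$. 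The one place where you diverge is the verification of the hypothesis \eqref{eq:m-large-weak} on $G_0$: the paper contents itself with the global bound $\| L_{g''} \phi_1 \|_{G_0} + \| R_{g''} \phi_2 \|_{G_0} \leq 1 + \| \phi_2 \| \leq m (G) + t$, whereas you derive the coset-wise bound $P_1 (g''^{-1}) + P_2 (g'') \leq \mathrm{vol} (G_0) + t_{g''}$ by summing the nonnegative deficiencies $P_1 (h^{-1}) + P_2 (h) - t_h$ over $G / G_0$ and dominating the sum by $m (G)$. Since the threshold used on $G_0$ is $t_{g''} \leq t$, the condition genuinely required for the connected case is the one you check, so this extra care is not wasted; it makes explicit a step that the paper's one-line justification leaves implicit.
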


        \begin{proof}

          Since $\omega_{g''} - t_{g''} \leq f_{t_{g''}} \circ \omega_{g''}$ and $g'' G_0 g''^{-1} = G_0$ hold, we have
          \begin{equation}
            \int_{\{ g' \in G_0 \mid \phi_1 * \phi_2 (g') \geq t \}}^{} ( \omega_{g''} (g'' g' g''^{-1}) - t_{g''} ) dg'
            \leq \| f_{t_{g''}} \circ \omega_{g''} \|_{G_0}. \label{eq:omega-t-ftg''-larger}
          \end{equation}
          Then $\| L_{g''} \phi_1 \|_{G_0} \leq \| \phi_1 \| = 1$ and $\| R_{g''} \phi_2 \|_{G_0} \leq \| \phi_2 \|$ hold, and hence one has
          \begin{align*}
            \| L_{g''} \phi_1 \|_{G_0} + \| R_{g''} \phi_2 \|_{G_0}
            \leq 1 + \| \phi_2 \|
            \leq m (G) + t
          \end{align*}
          by \eqref{eq:m-large-weak}.
          By Example \ref{ex:rearrange-connected} and Subsection \ref{subsec:main-ft-m-finite-connected}, we have
          \begin{align*}
            \| f_{t_{g''}} \circ \omega_{g''} \|_{G_0}
            \leq \| f_{t_{g''}} \circ \psi_{P_1 (g''^{-1}) , P_2 (g'')} \|_{\mathbb{R}}
            = \| \psi_{P_1 (g''^{-1}) , P_2 (g'')} - t_{g''} \|_{( -x_0, x_0 )},
          \end{align*}
          where the last equality is due to the monotonicity of $\psi_{P_1 (g''^{-1}) , P_2 (g'')}$ and the definition of $ t_{g''}$.
          Thus, we obtain
          \begin{align*}
            \int_{\{ g' \in G_0 \mid \phi_1 * \phi_2 (g') \geq t \}}^{} ( \omega_{g''} (g'' g' g''^{-1}) - t_{g''} ) dg'
             & \leq \| f_{t_{g''}} \circ \omega_{g''} \|_{G_0}                        \\
             & \leq \| \psi_{P_1 (g''^{-1}) , P_2 (g'')} - t_{g''} \|_{( -x_0, x_0 )}
          \end{align*}
          by \eqref{eq:omega-t-ftg''-larger}.
        \end{proof}

        Since
        \begin{align*}
          \sum_{g'' G_0 \in G / G_0} \| \psi_{P_1 (g''^{-1}) , P_2 (g'')} - t_{g''} \|_{( -x_0, x_0 )}
          = \| f \circ ( \phi_1^* * \phi_2^* ) \|_{\mathbb{R} \times \{ G_0 \} }
        \end{align*}
        holds by Example \ref{ex:rearrange-convolution}, we obtain
        \begin{align*}
          \| f \circ ( \phi_1 * \phi_2 ) \|_{G_0}
          \leq \| f \circ ( \phi_1^* * \phi_2^* ) \|_{\mathbb{R} \times \{ G_0 \} }
        \end{align*}
        by \eqref{eq:convolution-omega-represent} and Lemma \ref{lem:omega-psi-larger}.

  \item \label{item:t-large}
        The case of $t \geq t_0$.
        One has $\| f \circ ( \phi_1 * \phi_2 ) \|_{G_0} \leq \| f_{t_0} \circ ( \phi_1 * \phi_2 ) \|_{G_0}$.
        In addition,
        \begin{align*}
          \| f_{t_0} \circ ( \phi_1 * \phi_2 ) \|_{G_0} \leq \| f_{t_0} \circ ( \phi_1^* * \phi_2^* ) \|_{\mathbb{R} \times \{ G_0 \} }
        \end{align*}
        holds according to \ref{item:t-small}.
        One has $\phi_1^* * \phi_2^* (x , G_0 ) \leq t_0$ for any $x \in \mathbb{R}$ by Example \ref{ex:rearrange-convolution} and hence
        \begin{align*}
          \| f \circ ( \phi_1 * \phi_2 ) \|_{G_0}
          = \| f \circ ( \phi_1^* * \phi_2^* ) \|_{\mathbb{R} \times \{ G_0 \} }
          = 0.
        \end{align*}

\end{enumerate}

Thus, we obtain $\| f \circ ( \phi_1 * \phi_2 ) \| \leq \| f \circ ( \phi_1^* * \phi_2^* ) \|$.

\section{Proof of Theorem \ref{thm:convex-convolution-L1-L-infinity} for any convex function \texorpdfstring{$f$}{f}}
\label{sec:f-general}

In this section, we show Theorem \ref{thm:convex-convolution-L1-L-infinity} by Lemma \ref{lem:ft-convolution-L1-L-infinity}.
It suffices to show it when $\| \phi_1 \| = 1$ holds by Remark \ref{rem:convex-convolution} \ref{item:convex-convolution-normalize}.
The function $f_{(n)} \colon [0,1] \to \mathbb{R}$ is defined as
\begin{align*}
  f_{(n)} (y) := n \left( f \left( \frac{1}{n} \right) y + 2 \sum_{k = 1}^{n-1} \hat{f}_{(k-1)/n}^{(k+1)/n} \left( \frac{k}{n} \right) f_{k/n} (y) \right)
\end{align*}
for $n \in \mathbb{Z}_{\geq 1}$ and function $f \colon [0,1] \to \mathbb{R}$, where $\hat{f}_{y_1}^{y_2}$ is the same as in Definition \ref{def:convex} and $f_t$ is as defined in \eqref{eq:ft-define}.
We show the following lemma to prove Theorem \ref{thm:convex-convolution-L1-L-infinity}.

\begin{lemma}
  \label{lem:fn-approximate}

  Let $f \colon [0,1] \to \mathbb{R}$ be a convex function with $f(0) = 0$.

  \begin{enumerate}
    \item \label{item:fn-approximate-simple}
          One has $f_{(n)} (y) = \tilde{f}_{(j-1)/n}^{j/n} (y)$ for any $j = 1, \cdots , n$ and $(j-1)/n \leq y \leq j/n$, where $\tilde{f}_{y_1}^{y_2}$ is the same as in Definition \ref{def:convex}.
          In particular, $f(y) \leq f_{(n)} (y)$ holds for any $0 \leq y \leq 1$, and the equality is attained at $y = 0 , 1/n , 2/n , \cdots , 1- 1/n , 1$ (Figure \ref{fig:fn-define}).

          \begin{figure}

            \centering
            \caption{the graph of $f_{(n)}$}
            \label{fig:fn-define}

            \vspace{8pt}

            \begin{tikzpicture}
              \begin{axis}
                [
                  axis x line=center,
                  axis y line=none,
                  xmin=-0.5,
                  xmax=1.5,
                  ymin=-0.2,
                  ymax=2.5,
                  xlabel={$y$},
                  xtick={0,1},
                  xticklabels={$\displaystyle \frac{j-1}{n}$,$\displaystyle \frac{j}{n}$},
                  ytick=\empty,
                ]
                \addplot [blue, domain=-0.5:1.5, samples=50,] {2*x^2-x+1};
                \draw (1/2,1) [blue, below right] node {$f(y)$};
                \draw [red] (-1,4) -- (0,1) -- (1,2) -- (2,7);
                \draw (1/2,3/2) [red, above left] node {$f_{(n)}(y)$};
                \draw [dotted] (0,0) -- (0,1);
                \draw [dotted] (1,0) -- (1,2);
              \end{axis}
            \end{tikzpicture}

          \end{figure}
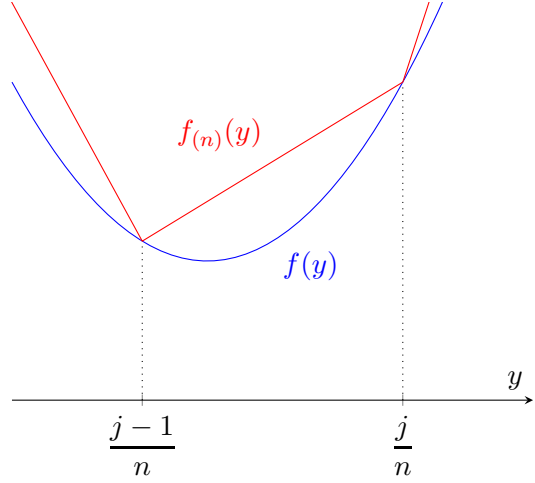

    \item \label{item:fn-approximate-f-close}
          For any $0 < y < 1$, one has
          \begin{align*}
            f_{(n)} (y) \leq f (y) + \frac{\hat{f}_{0}^{1} (y)}{4n y (1-y)}.
          \end{align*}

    \item \label{item:fn-approximate-limit}
          One has $\lim_{n \to \infty} f_{(n)} (y) = f(y)$ for any $0 \leq y \leq 1$.
          That is, $f_{(n)}$ converges pointwise to $f$.

    \item \label{item:fn-approximate-upper-bound-f1}
          One has $f_{(n)} (y) \leq f(1) y$ for any $n \in \mathbb{Z}_{\geq 1}$ and $0 \leq y \leq 1$.

    \item \label{item:fn-approximate-integral}
          For any integrable function $\psi \colon G \to [0,1]$, one has
          \begin{align*}
            \lim_{n \to \infty} \int_{G}^{} f_{(n)} \circ \psi (g) dg = \int_{G}^{} f \circ \psi (g) dg.
          \end{align*}

  \end{enumerate}

\end{lemma}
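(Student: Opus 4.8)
My plan is to prove the five parts in the order \ref{item:fn-approximate-simple}, \ref{item:fn-approximate-f-close}, then \ref{item:fn-approximate-limit}--\ref{item:fn-approximate-upper-bound-f1}, and finally \ref{item:fn-approximate-integral}, since each rests on the earlier ones. The conceptual key is part \ref{item:fn-approximate-simple}: I would recognize $f_{(n)}$ as the continuous piecewise-linear interpolant of $f$ at the nodes $x_k := k/n$. Concretely, since $k/n$ is the midpoint of $[(k-1)/n,(k+1)/n]$, one has
\[
  2\hat{f}_{(k-1)/n}^{(k+1)/n}(k/n) = f(x_{k+1}) - 2f(x_k) + f(x_{k-1}),
\]
the discrete second difference. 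Each hinge $f_{k/n}$ of \eqref{eq:ft-define} is continuous, vanishes to the left of $x_k$, and increases its slope by $1$ at $x_k$; hence $f_{(n)}$ is continuous, affine on each $[x_{j-1},x_j]$, has slope $nf(1/n)=n(f(x_1)-f(x_0))$ on $[0,1/n]$, and its slope jumps by $n(f(x_{k+1})-2f(x_k)+f(x_{k-1}))$ at each interior node. These are exactly the slopes of the interpolant, and $f_{(n)}(0)=0=f(0)$, so the two functions coincide; this gives $f_{(n)}=\tilde{f}_{(j-1)/n}^{j/n}$ on each segment. Dominance $f\le f_{(n)}$ and equality at the nodes are then immediate from convexity (Definition \ref{def:convex}).

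The main obstacle is the quantitative estimate of part \ref{item:fn-approximate-f-close}. Here I would pass to $h := \hat{f}_0^1$, which is concave and satisfies $h\ge 0$ and $h(0)=h(1)=0$. Since $h=\ell-f$ for the affine function $\ell(y):=f(1)y$, a one-line computation gives $\hat{f}_{x_{j-1}}^{x_j} = -\hat{h}_{x_{j-1}}^{x_j} = h-\tilde{h}_{x_{j-1}}^{x_j}$, and by part \ref{item:fn-approximate-simple} the left-hand side equals $f_{(n)}-f$ on $[x_{j-1},x_j]$. Writing $p:=y-x_{j-1}\ge 0$ and $q:=x_j-y\ge 0$ (so $p+q=1/n$), I would lower-bound $\tilde{h}_{x_{j-1}}^{x_j}(y)$ using the two concavity chords of $h$ through the endpoints, namely $h(x_{j-1})\ge (x_{j-1}/y)h(y)$ and $h(x_j)\ge \frac{1-x_j}{1-y}h(y)$. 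After simplification these yield
\[
  \tilde{h}_{x_{j-1}}^{x_j}(y) \ge h(y) - \frac{npq}{y(1-y)}\,h(y),
\]
so that $f_{(n)}(y)-f(y)=h(y)-\tilde{h}_{x_{j-1}}^{x_j}(y)\le \frac{npq}{y(1-y)}h(y)$. The estimate then closes by AM--GM, $pq\le (p+q)^2/4 = 1/(4n^2)$, giving precisely the claimed bound $\hat{f}_0^1(y)/(4ny(1-y))$.

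Parts \ref{item:fn-approximate-limit} and \ref{item:fn-approximate-upper-bound-f1} follow quickly. For \ref{item:fn-approximate-limit}, squeezing $f(y)\le f_{(n)}(y)\le f(y)+\hat{f}_0^1(y)/(4ny(1-y))$ from parts \ref{item:fn-approximate-simple} and \ref{item:fn-approximate-f-close} gives pointwise convergence for $0<y<1$, while $f_{(n)}(0)=f(0)$ and $f_{(n)}(1)=f(1)$ hold by the equality-at-nodes statement. For \ref{item:fn-approximate-upper-bound-f1}, at every node $f_{(n)}(x_k)=f(x_k)\le f(1)x_k$ by convexity (this is $\hat{f}_0^1(x_k)\ge 0$); since both $f_{(n)}$ and $y\mapsto f(1)y$ are affine on each $[x_{j-1},x_j]$, the inequality at the two endpoints propagates across the segment, giving $f_{(n)}(y)\le f(1)y$ throughout.

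For part \ref{item:fn-approximate-integral} I would combine parts \ref{item:fn-approximate-simple} and \ref{item:fn-approximate-upper-bound-f1} to get the pointwise bracketing $f\circ\psi \le f_{(n)}\circ\psi \le f(1)\psi$. Setting $g_n := f(1)\psi - f_{(n)}\circ\psi \ge 0$, part \ref{item:fn-approximate-limit} gives $g_n\to f(1)\psi-f\circ\psi$ pointwise, and Fatou's lemma applied to $(g_n)$, together with $\int_G f(1)\psi = f(1)\|\psi\|<\infty$, yields $\limsup_n \int_G f_{(n)}\circ\psi \le \int_G f\circ\psi$; the reverse inequality for the $\liminf$ is immediate from $f_{(n)}\ge f$, so the limit equals $\int_G f\circ\psi$. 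I would use Fatou rather than dominated convergence precisely because it remains valid when $\int_G f\circ\psi=-\infty$ (e.g.\ $f(y)=-\sqrt{y}$), a case where no integrable lower envelope for $f\circ\psi$ need exist.
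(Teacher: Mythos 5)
Your proof is correct and takes essentially the same route as the paper's: the same identification of $f_{(n)}$ as the piecewise-linear interpolant of $f$ at the nodes $k/n$, the same two endpoint chord estimates in part (2) (your bound $f_{(n)}(y)-f(y)\le npq\,\hat{f}_0^1(y)/(y(1-y))$ is exactly the paper's intermediate expression $\hat{f}_{0}^{1}(y)(1-(2ny+1-2j)^2)/(4ny(1-y))$ after the substitution $j-ny=nq$), and the identical Fatou argument applied to $f(1)\psi - f_{(n)}\circ\psi$ in part (5). The only differences are organizational --- matching slopes rather than telescoping values in part (1), and packaging part (2) through the concave function $h=\hat{f}_0^1$ --- so there is nothing substantive to add.
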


\begin{proof}

  \begin{enumerate}
    \item
          Since $(j-1)/n \leq y \leq j/n$ holds, one has
          \begin{align*}
            f_{k/n} (y) = \left\{
            \begin{aligned}
               & y - \frac{k}{n} &  & \text{if} \; k \leq j - 1, \\
               & 0               &  & \text{if} \; k \geq j
            \end{aligned}
            \right.
          \end{align*}
          for any $k = 1 , \cdots , n$.
          Then
          \begin{align*}
            \hat{f}_{(k-1)/n}^{(k+1)/n} \left( \frac{k}{n} \right)
            = \frac{1}{2} \left( f \left( \frac{k-1}{n} \right) + f \left( \frac{k+1}{n} \right) \right) - f \left( \frac{k}{n} \right)
          \end{align*}
          holds, and hence one has
          \begin{align*}
            f_{(n)} (y) = n f \left( \frac{1}{n} \right) y + \sum_{k = 1}^{j-1} \left( f \left( \frac{k-1}{n} \right) + f \left( \frac{k+1}{n} \right) - 2 f \left( \frac{k}{n} \right) \right) ( n y - k ).
          \end{align*}
          Since
          \begin{align*}
            \sum_{k = 1}^{j-1} f \left( \frac{k - 1}{n} \right) ( n y - k )
            = f \left( \frac{j - 1}{n} \right) (j - ny) + \sum_{k = 1}^{j-1} f \left( \frac{k}{n} \right) ( n y - k - 1 )
          \end{align*}
          holds by $f (0) = 0$ and
          \begin{align*}
            n f \left( \frac{1}{n} \right) y + \sum_{k = 1}^{j-1} f \left( \frac{k + 1}{n} \right) ( n y - k )
            = f \left( \frac{j}{n} \right) (ny - j + 1) + \sum_{k = 1}^{j-1} f \left( \frac{k}{n} \right) ( n y - k + 1 )
          \end{align*}
          holds, one has
          \begin{align*}
            f_{(n)} (y)
            = \tilde{f}_{(j-1)/n}^{j/n} (y) + \sum_{k = 1}^{j-1} f \left( \frac{k}{n} \right) ( n y - k - 1 + ny - k + 1 - 2 (ny - k) )
            = \tilde{f}_{(j-1)/n}^{j/n} (y).
          \end{align*}
          In particular, $f(y) \leq f_{(n)} (y)$ holds for any $0 \leq y \leq 1$ by the convexity of $f$, and the equality is attained at $y = 0 , 1/n , 2/n , \cdots , 1- 1/n , 1$.

    \item
          It suffices to show the result when $j = 1 , \cdots , n$ and $(j-1)/n \leq y \leq j/n$ hold.
          One has
          \begin{equation}
            f_{(n)} (y)
            \leq \tilde{f}_{(j-1)/n}^{j/n} (y)
            = (j - ny) f \left( \frac{j-1}{n} \right) + (ny - j + 1) f \left( \frac{j}{n} \right) \label{eq:fn-represent}
          \end{equation}
          by \ref{item:fn-approximate-simple}.
          Then
          \begin{align*}
            f \left( \frac{j - 1}{n} \right)
            \leq \tilde{f}_{0}^{y} \left( \frac{j - 1}{n} \right)
            = \frac{(j-1) f(y)}{n y}
          \end{align*}
          holds by $0 \leq (j-1)/n \leq y$, $f (0) = 0$ and the convexity of $f$, and similarly
          \begin{align*}
            f \left( \frac{j}{n} \right)
            \leq \tilde{f}_{y}^{1} \left( \frac{j}{n} \right)
            = \frac{(n-j) f(y) + (j-ny) f(1)}{n (1-y)}
          \end{align*}
          holds by $y \leq j/n \leq 1$.
          Thus, one has
          \begin{align*}
             & \quad f_{(n)} (y)                                                                                                                                       \\
             & \leq \frac{(j - ny) (j-1) f(y)}{ny} + \frac{(ny - j + 1) ((n-j) f(y) + (j-ny) f(1))}{n (1-y)}                                                           \\
             & = \frac{f (y) - f(1) y}{ny (1-y)} \cdot j^2 + \frac{(2ny + 1) (f (1) y - f (y))}{n y (1-y)} \cdot j + f (y) + \frac{(n y + 1) ( f(y) - y f (1))}{1 - y} \\
             & = f (y) + \frac{f (y) - f(1) y}{ny (1-y)} \left( \left( j - ny - \frac{1}{2} \right)^2 - \frac{1}{4} \right)                                            \\
             & = f (y) + \frac{\hat{f}_{0}^{1} (y) (1 - (2ny + 1 - 2j)^2 )}{4ny(1-y)}.
          \end{align*}
          Since $\hat{f}_{0}^{1} (y) \geq 0$ holds by the convexity of $f$, we obtain
          \begin{align*}
            f_{(n)} (y)
            \leq f (y) + \frac{\hat{f}_{0}^{1} (y) (1 - (2ny + 1 - 2j)^2 )}{4ny(1-y)}
            \leq f (y) + \frac{\hat{f}_{0}^{1} (y)}{4ny(1-y)}.
          \end{align*}

    \item
          If $y = 0,1$ holds, then one has $f_{(n)} (y) = f (y)$ by \ref{item:fn-approximate-simple}.
          Thus, it suffices to show the result when $0 < y < 1$.
          One has
          \begin{align*}
            f (y) \leq f_{(n)} (y) \leq f (y) + \frac{a}{n}, \quad
            a := \frac{\hat{f}_{0}^{1} (y)}{4y(1-y)}
          \end{align*}
          by \ref{item:fn-approximate-simple} and \ref{item:fn-approximate-f-close}.
          Since
          \begin{align*}
            \lim_{n \to \infty} \left( f (y) + \frac{a}{n} \right)
            = f (y),
          \end{align*}
          we obtain
          \begin{align*}
            \lim_{n \to \infty} f_{(n)} (y) = f(y).
          \end{align*}

    \item
          It suffices to show the result when $j = 1 , \cdots , n$ and $(j-1)/n \leq y \leq j/n$ hold.
          One has
          \begin{align*}
            f_{(n)} (y)
            \leq (j - ny) f \left( \frac{j-1}{n} \right) + (ny - j + 1) f \left( \frac{j}{n} \right)
          \end{align*}
          by \eqref{eq:fn-represent}.
          In addition,
          \begin{align*}
            f \left( \frac{j - 1}{n} \right)
            \leq \tilde{f}_{0}^{1} \left( \frac{j-1}{n} \right)
             & = \frac{(j-1) f(1)}{n} , &  & \text{and} &
            f \left( \frac{j}{n} \right)
            \leq \tilde{f}_{0}^{1} \left( \frac{j}{n} \right)
             & = \frac{j f(1)}{n}
          \end{align*}
          hold by $f(0) =0$ and the convexity of $f$.
          Thus, one has
          \begin{align*}
            f_{(n)} (y) \leq \frac{(j - ny) (j-1) f(1)}{n} + \frac{(ny - j + 1) j f(1)}{n}
            = f(1) y.
          \end{align*}

    \item
          Since $f_{(n)} \geq f$ by \ref{item:fn-approximate-simple}, one has
          \begin{equation}
            \liminf_{n \to \infty} \int_{G}^{} f_{(n)} \circ \psi (g) dg \geq \int_{G}^{} f \circ \psi (g) dg. \label{eq:fn-integral-liminf}
          \end{equation}
          One has $f_{(n)} \circ \psi (g) \leq f (1) \psi (g)$ for any $n \in \mathbb{Z}_{\geq 1}$ by \ref{item:fn-approximate-upper-bound-f1}.
          Since $\psi$ is integrable,
          \begin{equation}
            \limsup_{n \to \infty} \int_{G}^{} f_{(n)} \circ \psi (g) dg \leq \int_{G}^{} f \circ \psi (g) dg \label{eq:fn-integral-limsup}
          \end{equation}
          holds by applying Fatou's lemma to $f(1) \psi - f_{(n)} \circ \psi$.
          Thus, we obtain
          \begin{align*}
            \lim_{n \to \infty} \int_{G}^{} f_{(n)} \circ \psi (g) dg = \int_{G}^{} f \circ \psi (g) dg
          \end{align*}
          by \eqref{eq:fn-integral-liminf} and \eqref{eq:fn-integral-limsup}.
          \qedhere
  \end{enumerate}

\end{proof}

\begin{proof}[Proof of Theorem \ref{thm:convex-convolution-L1-L-infinity}]

  It suffices to show the result when $\| \phi_1 \| = 1$ holds by Remark \ref{rem:convex-convolution} \ref{item:convex-convolution-normalize}.
  Since
  \begin{align}
    \| \phi_1 * \phi_2 \| = \| \psi_{1,\| \phi_2 \|} \| = \| \phi_2 \| < \infty \label{eq:Fubini-apply}
  \end{align}
  holds by Lemma \ref{lem:Fubini}, one has
  \begin{align}
    \int_{G}^{} f \circ ( \phi_1 * \phi_2 ) (g) dg
    = \lim_{n \to \infty} \int_{G}^{} f_{(n)} \circ ( \phi_1 * \phi_2 ) (g) dg \label{eq:composition-express}
  \end{align}
  by Lemma \ref{lem:fn-approximate} \ref{item:fn-approximate-integral}.
  By the definition of $f_{(n)}$,
  \begin{align}
    \int_{G}^{} f_{(n)} \circ ( \phi_1 * \phi_2 ) (g) dg
     & = n \left( f \left( \frac{1}{n} \right) \| \phi_1 * \phi_2 \| + 2 \sum_{k = 1}^{n-1} \hat{f}_{(k-1)/n}^{(k+1)/n} \left( \frac{k}{n} \right) \| f_{k/n} \circ ( \phi_1 * \phi_2 ) \| \right) \label{eq:fn-phi1-phi2}
  \end{align}
  holds.
  Since \eqref{eq:ft-convolution-psi0} holds by Lemma \ref{lem:ft-convolution-L1-L-infinity} and $\hat{f}_{(k-1)/n}^{(k+1)/n} ( k/n ) \geq 0$ holds by the convexity of $f$, one has
  \begin{align*}
    \hat{f}_{(k-1)/n}^{(k+1)/n} \left( \frac{k}{n} \right) \| f_{k/n} \circ ( \phi_1 * \phi_2 ) \|
    \leq \hat{f}_{(k-1)/n}^{(k+1)/n} \left( \frac{k}{n} \right) \| f_{k/n} \circ \psi_{1,\| \phi_2 \|} \|
  \end{align*}
  for any $k = 1 , \cdots , n-1$.
  Thus,
  \begin{align}
    \int_{G}^{} f_{(n)} \circ ( \phi_1 * \phi_2 ) (g) dg
     & \leq n \left( f \left( \frac{1}{n} \right) \| \psi_{1,\| \phi_2 \|} \| + 2 \sum_{k = 1}^{n-1} \hat{f}_{(k-1)/n}^{(k+1)/n} \left( \frac{k}{n} \right) \| f_{k/n} \circ \psi_{1,\| \phi_2 \|} \| \right) \notag \\
     & = \int_{\mathbb{R}}^{} f_{(n)} \circ \psi_{1,\| \phi_2 \|} (x) dx \label{eq:express-main-ft}
  \end{align}
  holds by \eqref{eq:Fubini-apply} and \eqref{eq:fn-phi1-phi2}.
  Since
  \begin{align}
    \lim_{n \to \infty} \int_{\mathbb{R}}^{} f_{(n)} \circ \psi_{1,\| \phi_2 \|} (x) dx
    = \int_{\mathbb{R}}^{} f \circ \psi_{1,\| \phi_2 \|} (x) dx \label{eq:fn-approximate-tilde}
  \end{align}
  holds by Lemma \ref{lem:fn-approximate} \ref{item:fn-approximate-integral}, we obtain
  \begin{align*}
    \int_{G}^{} f \circ ( \phi_1 * \phi_2 ) (g) dg
    \leq \int_{\mathbb{R}}^{} f \circ \psi_{1,\| \phi_2 \|} (x) dx
    = 2 \int_{0}^{1} f(y) dy + ( \| \phi_2 \| - 1 ) f( 1 )
  \end{align*}
  by \eqref{eq:tilde-culculation}, \eqref{eq:composition-express}, \eqref{eq:express-main-ft}, and \eqref{eq:fn-approximate-tilde}.
\end{proof}

\section*{Acknowledgement}

The author would like to thank his advisor Toshiyuki Kobayashi for his support.
The author is also grateful to Yuichiro Tanaka and Toshihisa Kubo for helpful comments.
The author also would like to thank the reviewers for the careful comments.

\section*{Funding}

This work was supported by JSPS KAKENHI Grant Number JP19J22628 and Leading Graduate Course for Frontiers of Mathematical Sciences and Physics (FMSP).

\printbibliography

\noindent
Takashi Satomi: Graduate School of Mathematical Sciences, The University of Tokyo, 3-8-1 Komaba Meguro-ku Tokyo 153-8914, Japan.

\noindent
E-mail: tsatomi@ms.u-tokyo.ac.jp

\end{document}